\documentclass{amsproc}%
\usepackage{amssymb}
\usepackage{amsfonts}
\usepackage{amsmath}
\usepackage{graphicx}%
\setcounter{MaxMatrixCols}{30}
\theoremstyle{plain}

\newtheorem{corollary}{Corollary}

\newtheorem{example}{Example}

\newtheorem{lemma}{Lemma}

\newtheorem{proposition}{Proposition}

\newtheorem{theorem}{Theorem}
\numberwithin{equation}{section}
\begin{document}
\title{$\mathbb{C}$-orbit reflexive operators}
\author{Don Hadwin}
\address{University of New Hampshire}
\email{don@unh.edu}
\author{Ileana Ionascu }
\address{Philadelphia University}
\email{ionascui@philau.edu}
\author{Michael McHugh }
\address{Phillips Academy Andover}
\email{mmchugh@andover.edu}
\author{Hassan Yousefi}
\address{California State University Fullerton}
\email{hyousefi@fullerton.edu}
\subjclass[2000]{Primary 05C38, 15A15; Secondary 05A15, 15A18}
\keywords{orbit-reflexive operator, reflexive operator, $\mathbb{C}$-orbit reflexive
operator }

\begin{abstract}
We introduce the notion of $\mathbb{C}$-orbit reflexivity and study its
properties. An operator on a finite-dimensional space is $\mathbb{C}$-orbit
reflexive if and only if the two largest blocks in its Jordan form
corresponding to nonzero eigenvalues with the largest modulus differ in size
by at most one. Most of the proofs of our results in infinite dimensions are
obtained from purely algebraic results we obtain from linear-algebraic analogs
of $\mathbb{C}$-orbit reflexivity.

\end{abstract}
\maketitle

\bigskip

\section{Introduction\bigskip}

The term \emph{reflexive} for an algebra of operators was coined by P. R.
Halmos \cite{PRH}, but the first theorem about reflexivity was proved earlier
by D. Sarason \cite{Sa}. Since that time there have been many papers written
on the topic, and various notions of reflexivity (e.g., algebraic reflexivity
\cite{H1}, approximate reflexivity \cite{H2}, orbit reflexivity \cite{HNRR})
for linear subspaces, convex sets and other sets of operators, have been
studied extensively, e.g., \cite{Ar}, \cite{AP}, \cite{C}, \cite{Dedd},
\cite{DF}, \cite{Fil}, \cite{Lar}, \cite{LS}, \cite{P}, \cite{Sheh}. In
\cite{H3} a very general notion of reflexivity that contained many of these
notions as special cases. Orbit reflexivity was introduced and studied in
\cite{HNRR}, but it wasn't until over twenty years later that an operator was
constructed on a Hilbert space that is not orbit reflexive \cite{GR} (see also
\cite{MV} and \cite{Est}). John von Neumann's classic double commutant theorem
\cite{vN} can be viewed as the statement that every von Neumann algebra is
reflexive. In fact, many view reflexive algebras as nonselfadjoint analogues
of von Neumann algebras. Nest algebras, i.e., reflexive algebras whose lattice
of invariant subspaces is a chain, have received a great deal of attention
(see \cite{D}). W. Arveson \cite{Ar2} relates reflexivity to spectral
synthesis in commutative harmonic analysis and remarks that it is appropriate
to consider reflexivity questions as "noncommutative harmonic analysis".
Reflexivity appears in other guises, often in the form of a "local" property,
e.g., local derivations, local automorphisms, local multiplications \cite{HK},
\cite{HL}, \cite{Han},\cite{J}, \cite{Kad}, \cite{LS}, \cite{Sh}.

In this paper we introduce a new notion of reflexivity for operators,
$\mathbb{C}$\emph{-orbit reflexivity}, and we also define a linear-algebraic
analogue. This notion is related to the notion of orbit reflexivity
\cite{HNRR}. We first prove a number of results in the purely algebraic case,
and we use these to prove several results for operators on a normed space or a
Hilbert space. We also give an easy proof that every subnormal operator is
orbit reflexive. In finite-dimensions a characterization of reflexivity for a
single matrix was given \cite{DF} in terms of the Jordan form, i.e., for each
eigenvalue the largest two Jordan blocks differ in size by at most $1$. Every
matrix is orbit reflexive \cite{HNRR}. However, $\mathbb{C}$-orbit reflexivity
has a characterization in terms of the Jordan form that is similar to, but
quite different from, the one for reflexivity, i.e., among the nonzero
eigenvalues with maximum modulus the largest two Jordan blocks differ in size
by at most $1$.

Suppose $X$ is a normed space and $\mathcal{A}$ is an algebra of (bounded
linear) operators on $X$. A (closed linear) subspace $M$ of $X$ is
$\mathcal{A}$\emph{-invariant} if $A\left(  M\right)  \subseteq M$ for every
$A\in\mathcal{A}$. We let \textrm{Lat}$\mathcal{A}$ denote the set of all
invariant subspaces for $\mathcal{A}$, and we let \textrm{AlgLat}$\mathcal{A}$
denote the algebra of all operators that leave invariant every $\mathcal{A}%
$-invariant subspace. The algebra $\mathcal{A}$ is \emph{reflexive} if
$\mathcal{A}=$\textrm{AlgLat}$\mathcal{A}$. If the algebra $\mathcal{A}$
contains the identity operator $1$, then $S\in$\textrm{AlgLat}$\mathcal{A}$ if
and only if, for every $x\in X,$ $Sx$ is in the closure of $\mathcal{A}x$.
This characterization works equally well for linear subspace $\mathcal{S}$ of
$B\left(  X\right)  $ (the set of all operators on $X$), i.e., we define
\textrm{ref}$\mathcal{S}$ to be the set of all operators $A$ such that, for
every $x\in X$, we have $Ax$ is in the closure of $\mathcal{S}x$, and we say
that $\mathcal{S}$ is \textrm{reflexive} if $\mathcal{S}=\mathrm{ref}%
\mathcal{S}$. If we let $T$ be a single operator and let $\mathcal{S}%
=Orb\left(  T\right)  =\left\{  T^{n}:n\geq0\right\}  ,$ we apply the same
process to obtain the notion of orbit reflexivity. (Note that in this case
$\mathcal{S}$ is not a linear space.) We define OrbRef$\left(  T\right)  $ to
be the set of all operators $A$ such that, for every vector $x,$ we have $Ax$
is in the closure of Orb$\left(  T,x\right)  =$ Orb$\left(  T\right)  x$. We
say that $T$ is \emph{orbit reflexive} if OrbRef$\left(  T\right)  $ is the
closure of Orb$\left(  T\right)  $ in the strong operator topology (SOT). For
the next notion we allow powers and scalar multiples. We define, for the field
$\mathbb{F}\in\left\{  \mathbb{C},\mathbb{R}\right\}  $
\[
\mathbb{F}\text{\textrm{-}}Orb\left(  T\right)  =\left\{  \lambda T^{n}%
:n\in\mathbb{N},\lambda\in\mathbb{F}\right\}  ,
\]
and
\[
\mathbb{F}\text{\textrm{-}}Orb\left(  T,x\right)  =\left\{  \lambda
T^{n}x:n\geq0,\lambda\in\mathbb{F}\right\}  ,
\]%
\[
\mathbb{F}\text{\textrm{-}}\mathrm{OrbRef}\left(  T\right)  =\left\{  S\in
B\left(  H\right)  :Sx\in\mathbb{F}\text{\textrm{-}}Orb\left(  T,x\right)
^{-}\text{ for every }x\in H\right\}  .
\]
And we say that $T$ is $\mathbb{F}$\emph{-orbit reflexive} if $\mathbb{F}%
$-OrbRef$\left(  T\right)  $ is the strong operator closure of $\mathbb{F}%
$-Orb$\left(  T\right)  $.

\bigskip

\section{Algebraic Results}

Throughout this section $\mathbb{F}$ will denote an arbitrary field, $X$ will
denote a vector space over $\mathbb{F}$, and $\mathcal{L}\left(  X\right)  $
will denote the algebra of all linear transformations on $X$. If
$T\in\mathcal{L}\left(  X\right)  $, we define $\mathbb{F}$-OrbRef$_{0}\left(
T\right)  $ to be the set of all $S\in\mathcal{L}\left(  X\right)  $ such
that, for every $x\in X$, $Sx\in Orb\left(  T,x\right)  $. We say that $T$ is
algebraically $\mathbb{F}$\emph{-orbit reflexive} if $\mathbb{F}$%
-OrbRef$_{0}\left(  T\right)  =\mathbb{F}$-$Orb\left(  T\right)  $.

A transformation $T\in\mathcal{L}\left(  X\right)  $ is \emph{locally
nilpotent} if $X=\cup_{n\geq1}\ker\left(  T^{n}\right)  $. More generally $T$
is \emph{locally algebraic} if, for each $x\in X$, there is a nonzero
polynomial $p_{x}\in\mathbb{F}\left[  t\right]  $ such that $p_{x}\left(
T\right)  x=0$. If $p_{x}\left(  t\right)  $ is chosen to be monic with
minimal degree, we call $p_{x}$ a \emph{local polynomial} for $T$ at $x$.

\bigskip

\begin{theorem}
\label{locnil}Every locally nilpotent linear transformation on a vector space
$X$ over field $\mathbb{F}$ is algebraically $\mathbb{F}$-orbit reflexive.
Moreover, if $S\in\mathbb{F}$-OrbRef$_{0}\left(  T\right)  ,$ $f\in X,$
$\beta\in\mathbb{F}$, and $Sf=\beta T^{k}f\neq0,$ then $S=\beta T^{k}.$
\end{theorem}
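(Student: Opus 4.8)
The plan is to dispose of two formal reductions and then concentrate on a single Jordan block, where the real difficulty lives. One inclusion is immediate: if $S=\lambda T^{n}$ then $Sx=\lambda T^{n}x\in\mathbb{F}\text{-}Orb(T,x)$ for every $x$, so $\mathbb{F}\text{-}Orb(T)\subseteq\mathbb{F}\text{-OrbRef}_{0}(T)$. For the reverse inclusion it suffices to prove the ``moreover'' clause: given $S\in\mathbb{F}\text{-OrbRef}_{0}(T)$, either $S=0=0\cdot T^{0}\in\mathbb{F}\text{-}Orb(T)$, or there is an $f$ with $Sf\neq0$, and then writing $Sf=\beta T^{k}f$ (possible since $S\in\mathbb{F}\text{-OrbRef}_{0}(T)$) with $Sf\neq0$, the clause yields $S=\beta T^{k}\in\mathbb{F}\text{-}Orb(T)$. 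So I would focus entirely on the clause. The one structural fact I extract at the outset is that every $S\in\mathbb{F}\text{-OrbRef}_{0}(T)$ leaves each cyclic subspace $\mathbb{F}[T]x=\mathrm{span}\{T^{j}x:j\geq0\}$ invariant, since $Sx\in\mathbb{F}[T]x$; hence $S$ preserves every finite sum of cyclic subspaces. Because $T$ is locally nilpotent, each $\mathbb{F}[T]x$ is finite dimensional and $T$ is nilpotent on it.

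The easy half of the clause is the independent case. If $\mathbb{F}[T]x\cap\mathbb{F}[T]f=0$, I would evaluate $S$ on $f+x$: on one side $S(f+x)=\beta T^{k}f+\lambda T^{n}x$ using $Sx=\lambda T^{n}x$, on the other $S(f+x)=\mu T^{m}f+\mu T^{m}x$. Since the sum $\mathbb{F}[T]f\oplus\mathbb{F}[T]x$ is direct, the $f$-components and the $x$-components match separately. Reading the $f$-equation $\beta T^{k}f=\mu T^{m}f\neq0$ in the basis $f,Tf,\dots,T^{p-1}f$ of $\mathbb{F}[T]f$ (where $p$ is the order of $f$) forces $m=k$ and $\mu=\beta$, and the $x$-equation then gives $Sx=\lambda T^{n}x=\beta T^{k}x$. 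Thus $Sx=\beta T^{k}x$ for every $x$ whose cyclic subspace meets $\mathbb{F}[T]f$ trivially, and since $\{x:Sx=\beta T^{k}x\}$ is a linear subspace, it contains the span of all such $x$.

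The crux is a vector $x$ with $\mathbb{F}[T]x\cap\mathbb{F}[T]f\neq0$, which the previous paragraph does not reach (e.g.\ when $X$ is a single block). Here I would pass to the finite-dimensional, $T$- and $S$-invariant subspace $W=\mathbb{F}[T]f+\mathbb{F}[T]x$, on which $T$ is nilpotent, and reduce by a Jordan decomposition of $(W,T)$ to the model problem of a single cyclic block: a space with generator $w$ and basis $w,Tw,\dots,T^{d-1}w$ containing $f$, where $Sf=\beta T^{k}f\neq0$. Writing $j_{0}$ for the valuation of $f$ (so $f=c_{j_{0}}T^{j_{0}}w+\cdots$ with $c_{j_{0}}\neq0$), the submodule $\mathbb{F}[T]f$ is exactly $\mathrm{im}(T^{j_{0}})$, and inside it $f$ is a cyclic generator; this is the base configuration, from which I would prove $S=\beta T^{k}$ on the whole block by induction, computing $S(T^{i}w)$ for descending $i$ and climbing from $\mathbb{F}[T]f$ up to the lower-valuation vectors $w,\dots,T^{j_{0}-1}w$, each step an $f+T^{i}w$ comparison in the block coordinates.

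The main obstacle, where I expect essentially all of the work to lie, is closing this single-block induction. Evaluating $S$ on $f+T^{i}w$ gives, on one side, $\beta T^{k}f+S(T^{i}w)$ with $S(T^{i}w)=\lambda_{i}T^{n_{i}}(T^{i}w)$, and on the other $\mu T^{m}(f+T^{i}w)$; expanding in the basis $w,\dots,T^{d-1}w$ produces a two-term equation. Taken in isolation this equation admits spurious solutions, in which a wrong shift $n_{i}\neq k$ or a wrong scalar $\lambda_{i}\neq\beta$ is masked by an accidental cancellation against the $\beta T^{k}f$ term, so a single test vector does not determine $S(T^{i}w)$. I would eliminate these by testing against several auxiliary vectors, and I anticipate that the admissible choices depend on the arithmetic of $\mathbb{F}$; for instance over $\mathbb{F}_{2}$ the test $f+T^{i}w$ alone is inadequate and must be supplemented (for example by $T^{i}w+T^{i+1}w$). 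Making this elimination uniform in the field, thereby forcing $n_{i}=k$ and $\lambda_{i}=\beta$ at every valuation level and hence $S=\beta T^{k}$ throughout, is the decisive and most delicate step.
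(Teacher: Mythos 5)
Your preliminary reductions are sound: the containment $\mathbb{F}$-$Orb(T)\subseteq\mathbb{F}$-OrbRef$_{0}(T)$, the reduction of the theorem to the ``moreover'' clause, the observation that $S$ preserves cyclic subspaces, and the independent case ($\mathbb{F}[T]x\cap\mathbb{F}[T]f=0$, where $\mathbb{F}[T]y$ denotes $\mathrm{span}\{T^{j}y:j\geq0\}$) are all correct. But the dependent case, which is where the theorem actually lives, has two genuine gaps. First, your reduction to the single-block model rests on a false premise: the vector $f$ need not lie in any single Jordan block of any Jordan decomposition of $W=\mathbb{F}[T]f+\mathbb{F}[T]x$. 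Concretely, let $W$ have basis $\{w,Tw,T^{2}w,u\}$ with $T^{3}w=0$ and $Tu=0$, and take $f=Tw+u$ and $x=w$; then $W=\mathbb{F}[T]f+\mathbb{F}[T]x$ and $T$ is nilpotent on $W$ with Jordan type $(3,1)$. By uniqueness of the Jordan type, every Jordan decomposition of $W$ has one block of size $3$ and one of size $1$. Now $f$ cannot lie in the size-one block, since $Tf=T^{2}w\neq0$; and $f$ cannot lie in a size-three block $B=\mathrm{span}\{v,Tv,T^{2}v\}$, since $T^{2}f=0$ would force the $v$-coefficient of $f$ to vanish, i.e.\ $f\in T(B)\subseteq\mathrm{im}\,T=\mathrm{span}\{Tw,T^{2}w\}$, whereas $f=Tw+u\notin\mathrm{im}\,T$. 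So for this $(W,T,f)$ your model problem can never be set up, and the induction has nothing to run on. Second, even granting the model problem, its solution --- eliminating the spurious solutions uniformly in the field (you yourself note that over $\mathbb{F}_{2}$ the tests $f+T^{i}w$ are inadequate) --- is exactly the step you leave open; since the theorem must hold over \emph{every} field, this unperformed elimination is the entire content of the result, so as written the argument is incomplete rather than merely sketchy.

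Both difficulties evaporate if you change the test vector, which is the paper's route. In the finite-dimensional nilpotent case, probe $S$ not with $f$ but with a cyclic generator $e$ of a \emph{largest} Jordan block, of size $n_{1}$. Because $T^{n_{1}}=0$ on the whole space while $\{e,Te,\ldots,T^{n_{1}-1}e\}$ is linearly independent, the relation $S(e+g)=\alpha T^{j}(e+g)$, projected onto the block of $e$, either forces $\alpha T^{j}=0$ or pins down $(\alpha,j)=(\lambda,m)$ where $Se=\lambda T^{m}e$; in either case $Sg=\lambda T^{m}g$, first for $g$ in the complementary blocks and then (comparing coefficients in the basis of the big block) for $g\in\{Te,\ldots,T^{n_{1}-1}e\}$, giving $S=\lambda T^{m}$ on all of $X$ at once. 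The maximality of the block is precisely the mechanism that kills the spurious solutions you worry about: an exponent $j\geq n_{1}$ annihilates everything, and an exponent $j<n_{1}$ is visible, and hence uniquely determined, inside the big block. The vector $f$ of the ``moreover'' clause enters only afterwards: for locally nilpotent $T$ and arbitrary $h$, apply the finite-dimensional statement to the finite-dimensional invariant subspace $M=\mathbb{F}[T]f+\mathbb{F}[T]h$ (your $W$!) to get $S=\gamma T^{m}$ on $M$, and then linear independence of $\{f,Tf,\ldots,T^{n-1}f\}$ together with $0\neq Sf=\beta T^{k}f$ forces $\gamma=\beta$ and $m=k$, hence $Sh=\beta T^{k}h$. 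In short: your plan tries to determine $S$ locally at $f$ and propagate outward, which founders on the module-theoretic position of $f$; the paper determines $S$ globally from the top block and only then matches it against $f$.
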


\begin{proof}
Suppose first that $X$ is finite-dimensional and that $J_{n_{1}}\oplus
J_{n_{2}}\oplus\cdots\oplus J_{n_{k}}$ is the Jordan form for $T$ with
$n_{1}\geq n_{2}\geq\cdots\geq n_{k}\geq1.$ Suppose $S\in\mathbb{F}%
$-OrbRef$_{0}\left(  T\right)  .$ Suppose $e$ is in the domain of $J_{n_{1}}$
and $J_{n_{1}}^{n_{1}-1}e\neq0.$ We first assume that $Se\neq0.$ Then, since
$\left\{  e,Te,\ldots,T^{n_{1}-1}e\right\}  $ is linearly independent, there
is a unique $m,$ $0\leq m<n_{1}$ and a unique $\lambda\in\mathbb{F}$ such that
$Se=\lambda T^{m}e$. Suppose $g$ is in the domain of $J_{n_{2}}\oplus
\cdots\oplus J_{n_{k}}.$ Then there is an $\alpha\in\mathbb{F}$ and a $j,$
$1\leq j\leq n_{1}$ such that $Se+Sg=S\left(  e+g\right)  =\alpha T^{j}\left(
e+g\right)  =\alpha T^{j}e+\alpha T^{j}g$, and by projecting onto the domain
of $J_{n_{1}},$ we have $Se=\alpha T^{j}e$ which implies $j=m$ and
$\alpha=\lambda.$ Thus $Sg=\lambda T^{m}g$ for every $g$ in the domain of
$J_{n_{2}}\oplus\cdots\oplus J_{n_{k}}$. A similar argument, considering the
coefficient of $T^{m}e$ of $S\left(  e+g\right)  ,$ shows that $Sg=\lambda
T^{m}g$ if $g$ is any member of the linearly independent set $\left\{
Te,T^{2}e,\ldots,T^{n_{1}-1}e\right\}  .$ Hence $S=\lambda T^{m}$. Repeating
the same argument when $Se=0,$ gives $S=0.$ Hence $S\in$ $\mathbb{F}%
$-Orb$\left(  T\right)  $.

We now move to the general case. Suppose $T$ is locally nilpotent and
$S\in\mathbb{F}$-OrbRef$_{0}\left(  T\right)  .$ If $S=0$, we are done.
Suppose $f\in X$ and $Sf\neq0$. Choose $n\geq1$ so that $T^{n}f=0$ and
$T^{n-1}f\neq0.$ It follows that there is a $\beta\neq0$ in $\mathbb{F}$ and a
$k,$ $0\leq k<n$ such that $Sf=\beta T^{k}f\neq0$. Suppose $h\in X.$ Since $T$
is locally algebraic, it follows that $sp\left(  \left\{  T^{m}f:n\geq
0\right\}  \cup\left\{  T^{m}h:m\geq0\right\}  \right)  =M$ is a
finite-dimensional invariant subspace for $T,$ and, hence, for $S$. It follows
from our finite-dimensional case that there is an $m\geq0$ and a $\gamma
\in\mathbb{F}$ such that $Sx=\gamma T^{m}x$ for every $x\in M.$ In particular,
$0\neq Sf=\gamma T^{m}f,$ so $\gamma\neq0$ and $m<n$. We know that $\left\{
f,Tf,\ldots,T^{n-1}f\right\}  $ is linearly independent, so we have $m=k$ and
$\gamma=\beta.$ Thus $Sh=\beta T^{k}h.$ Since $h\in X$ was arbitrary, we have
$S=\beta T^{k}\in\mathbb{F}$-Orb$\left(  T\right)  $.
\end{proof}

\bigskip

For infinite fields the next theorem reduces the problem of algebraic
$\mathbb{F}$-orbit reflexivity to the case of locally algebraic
transformations. A key ingredient in the proof is an algebraic reflexivity
result from \cite{H1} that says if $\mathbb{F}$ is infinite and $T\in
\mathcal{L}\left(  X\right)  $ is not locally algebraic, then, whenever
$S\in\mathcal{L}\left(  X\right)  $ and for every $x\in X$ there is a
polynomial $p_{x}$ such that $Sx=p_{x}\left(  T\right)  x$, we must have
$S=p\left(  T\right)  $ for some polynomial $p$.

\begin{theorem}
\label{notlocalg} Suppose $X$ is a vector space over an infinite field
$\mathbb{F}$, and suppose $T\in\mathcal{L}\left(  X\right)  $ is not locally
algebraic. Then $T$ is algebraically $\mathbb{F}$-orbit reflexive.
\end{theorem}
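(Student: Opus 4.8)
The plan is to reduce the $\mathbb{F}$-orbit reflexivity condition to the algebraic reflexivity of the polynomial algebra $\mathbb{F}\left[T\right]$, apply the result from \cite{H1} quoted above, and then exploit the existence of a vector with an infinite linearly independent orbit to force the resulting polynomial to be a monomial.

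First I would take an arbitrary $S\in\mathbb{F}$-OrbRef$_{0}\left(T\right)$ and observe that its defining condition is stronger than the hypothesis of the \cite{H1} result: for each $x\in X$ there are $\lambda_{x}\in\mathbb{F}$ and $n_{x}\geq0$ with $Sx=\lambda_{x}T^{n_{x}}x$, and setting $p_{x}\left(t\right)=\lambda_{x}t^{n_{x}}$ gives $Sx=p_{x}\left(T\right)x$. Since $T$ is not locally algebraic and $\mathbb{F}$ is infinite, the quoted algebraic reflexivity result applies and produces a single polynomial $p\in\mathbb{F}\left[t\right]$ with $S=p\left(T\right)$.

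It then remains only to show that $p$ is a monomial. Because $T$ is not locally algebraic, there is a vector $x_{0}\in X$ for which no nonzero polynomial $q$ satisfies $q\left(T\right)x_{0}=0$; equivalently, the family $\left\{x_{0},Tx_{0},T^{2}x_{0},\ldots\right\}$ is linearly independent. Writing $p\left(t\right)=\sum_{i=0}^{d}a_{i}t^{i}$ and using the membership $Sx_{0}=\lambda_{x_{0}}T^{n_{x_{0}}}x_{0}$ gives $\sum_{i=0}^{d}a_{i}T^{i}x_{0}=\lambda_{x_{0}}T^{n_{x_{0}}}x_{0}$. Comparing coefficients against the linearly independent family $\left\{T^{i}x_{0}\right\}$ forces $a_{i}=0$ for $i\neq n_{x_{0}}$ and $a_{n_{x_{0}}}=\lambda_{x_{0}}$, so $p\left(t\right)=\lambda_{x_{0}}t^{n_{x_{0}}}$ and $S=\lambda_{x_{0}}T^{n_{x_{0}}}\in\mathbb{F}$-$Orb\left(T\right)$; the case $S=0$ is covered by taking $\lambda_{x_{0}}=0$.

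The substantive content is carried entirely by the \cite{H1} reflexivity theorem, so the main obstacle is already dispatched by that citation. The only points requiring care are verifying that the orbit-reflexivity condition genuinely implies its polynomial analogue (so that \cite{H1} is applicable at all), and noticing that a single vector $x_{0}$ with an infinite independent orbit suffices to pin down the polynomial $p$ as a monomial, with no further argument over all of $X$ needed once $S=p\left(T\right)$ is in hand.
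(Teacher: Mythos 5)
Your proof is correct and follows essentially the same route as the paper: apply the algebraic reflexivity theorem of \cite{H1} to obtain $S=p\left(T\right)$, then use a vector whose orbit is linearly independent (guaranteed since $T$ is not locally algebraic) to force $p$ to be a monomial. The only difference is cosmetic — you spell out the coefficient comparison that the paper leaves implicit in the step ``Hence $p\left(t\right)=\lambda t^{n}$.''
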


\begin{proof}
Suppose $S\in\mathbb{F}$-OrbRef$_{0}\left(  T\right)  $. Then $Sx\in
\mathbb{F}$-Orb$\left(  T\right)  $ for every $x\in X$. It follows from
\cite{H1} that $T$ is algebraically reflexive, so we know there is a
polynomial $p\in\mathbb{F}\left[  t\right]  $ such that $S=p\left(  T\right)
.$ Since $T$ is not locally algebraic, there is a vector $e\in X$ such that
for every nonzero polynomial $q\in\mathbb{F}\left[  t\right]  ,$ we have
$q\left(  T\right)  e\neq0$. Since $S\in\mathbb{F}$-OrbRef$_{0}\left(
T\right)  ,$ we know that there is an $n\in\mathbb{N}$ and a $\lambda
\in\mathbb{F}$ such that $Se=\lambda T^{n}e.$ Hence $p\left(  t\right)
=\lambda t^{n},$ and thus $S\in\mathbb{F}$-Orb$\left(  T\right)  $.
\end{proof}

\bigskip

The following lemma dashes all hope, at least for some fields, that in finite
dimensions every transformation is algebraically $\mathbb{F}$-orbit reflexive.

\begin{lemma}
\label{fdlem}Suppose $\mathbb{F}$ is a field and $T$ is the linear
transformation on $\mathbb{F}^{2}$ defined by the matrix $T=\left(
\begin{array}
[c]{cc}%
1 & 1\\
0 & 1
\end{array}
\right)  .$ The following are equivalent:

\begin{enumerate}
\item $T$ is algebraically $\mathbb{F}$-orbit reflexive

\item $\mathbb{F}$ is not isomorphic to $\mathbb{Z}/p\mathbb{Z}$ for some
prime $p$.

\item Whenever $X$ is a vector space over $\mathbb{F}$ and $A,S$ are linear
transformations on $X$, $v\in X$ such that there is an $\beta\in\mathbb{F}$
and there are integers $k\geq0,m\geq2$ such that

\begin{enumerate}
\item $\left(  A-1\right)  v\neq0,$

\item $\left(  A-1\right)  ^{m}v=0,$

\item $S\in\mathbb{F}$-OrbRef$_{0}\left(  A\right)  $

\item $Sv=\beta A^{k}v,$
\end{enumerate}

then we must have $S\left(  A-1\right)  v=\beta A^{k}\left(  A-1\right)  v.$
\end{enumerate}
\end{lemma}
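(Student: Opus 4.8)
The plan is to establish the cycle of implications $(1)\Rightarrow(2)\Rightarrow(3)\Rightarrow(1)$. I would first reformulate (2) arithmetically: writing $\mathbb{Z}_{\mathbb{F}}=\{n\cdot1_{\mathbb{F}}:n\in\mathbb{Z}\}$ for the image of the prime ring in $\mathbb{F}$, condition (2) is equivalent to $\mathbb{Z}_{\mathbb{F}}\neq\mathbb{F}$. Indeed, in characteristic $p$ one has $\mathbb{Z}_{\mathbb{F}}=\mathbb{F}_{p}$, which equals $\mathbb{F}$ exactly when $\mathbb{F}\cong\mathbb{Z}/p\mathbb{Z}$, while in characteristic $0$ the image is an isomorphic copy of $\mathbb{Z}$, always a proper subset, and (2) holds automatically. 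This reformulation is the form of (2) that I will actually use.

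For $(1)\Rightarrow(2)$ I argue the contrapositive. If $\mathbb{F}\cong\mathbb{Z}/p\mathbb{Z}$, consider the operator $S$ on $\mathbb{F}^{2}$ with $Se_{1}=0$ and $Se_{2}=e_{2}$. Every $\lambda T^{n}$ has equal diagonal entries, so $S\notin\mathbb{F}$-Orb$(T)$; but $S\in\mathbb{F}$-OrbRef$_{0}(T)$, since for $x=(a,b)$ with $b\neq0$ one can (because $\mathbb{F}=\mathbb{F}_{p}$) choose $l$ with $\bar{l}=-a/b$, whence $T^{l}x=be_{2}=Sx$, and the case $b=0$ is trivial. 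Thus $T$ is not algebraically $\mathbb{F}$-orbit reflexive. For $(3)\Rightarrow(1)$ I specialize (3) to $X=\mathbb{F}^{2}$, $A=T$, $v=e_{2}$, $m=2$, noting $(T-1)e_{2}=e_{1}\neq0$ and $(T-1)^{2}e_{2}=0$: given $S\in\mathbb{F}$-OrbRef$_{0}(T)$, reflexivity at $e_{2}$ gives $Se_{2}=\beta T^{k}e_{2}$, and (3) then yields $Se_{1}=\beta T^{k}e_{1}=\beta e_{1}$, so that $S=\beta T^{k}\in\mathbb{F}$-Orb$(T)$.

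The substance of the lemma is $(2)\Rightarrow(3)$. Given the data of (3), I would restrict attention to the finite-dimensional cyclic subspace $M=\mathrm{span}\{A^{j}v:j\ge0\}$. Since $(A-1)^{m}v=0$, $M$ has the basis $v_{j}=(A-1)^{j}v$, $0\le j<n$, where $n\le m$ is the nilpotency index of $(A-1)|_{M}$, and $A|_{M}$ is the single Jordan block $J_{n}(1)$. As each value $Sx$ is a scalar multiple of some $A^{l}x$, the whole $A$-invariant flag $M\supseteq(A-1)M\supseteq\cdots\supseteq(A-1)^{n-1}M$ is $S$-invariant; hence $S|_{M}$ is upper triangular in the basis $v_{0},\dots,v_{n-1}$ and belongs to $\mathbb{F}$-OrbRef$_{0}(J_{n}(1))$. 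Since $(A-1)v=v_{1}$, everything reduces to the claim $C(n)$: if $Sv_{0}=\beta A^{k}v_{0}$, then $Sv_{1}=\beta A^{k}v_{1}$.

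I would prove $C(n)$ by induction on $n$, using $A^{l}v_{j}=\sum_{r\ge j}\binom{l}{r-j}v_{r}$. The case $\beta=0$ is immediate: testing $x=v_{0}+v_{1}$ and matching the $v_{0}$-coefficient forces the relevant scalar to vanish, so $Sv_{1}=0$. For $\beta\neq0$, reducing modulo $(A-1)^{n-1}M=\mathrm{span}\{v_{n-1}\}$ (on which $A$ induces $J_{n-1}(1)$) and invoking the inductive hypothesis shows $Sv_{1}=\beta A^{k}v_{1}+\theta v_{n-1}$ for a single scalar $\theta$; it remains to force $\theta=0$. Testing reflexivity on the vectors $x=v_{0}+cv_{1}$ for $c\in\mathbb{F}$ and matching coefficients successively gives $\lambda=\beta$, then $\binom{l}{j}=\binom{k}{j}$ for $1\le j\le n-2$, and finally $\beta\big(\binom{l}{n-1}-\binom{k}{n-1}\big)=c\theta$. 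This last equation is the crux and the only place (2) enters: its left-hand side always lies in $\beta\,\mathbb{Z}_{\mathbb{F}}$, a proper subset of $\mathbb{F}$ by the reformulation of (2), whereas if $\theta\neq0$ the right-hand side $c\theta$ ranges over all of $\mathbb{F}$ as $c$ varies, a contradiction. The point I expect to require the most care is precisely this step: one must resist trying to describe exactly which binomial coefficients $\binom{l}{n-1}$ are realizable subject to the constraints on the lower indices, and instead observe that any such value lies in the prime subring $\mathbb{Z}_{\mathbb{F}}\subsetneq\mathbb{F}$, which is all that (2) supplies and all that is needed.
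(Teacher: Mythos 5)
Your proposal is correct. The outer implications match the paper: your $(1)\Rightarrow(2)$ is the paper's counterexample in a trivially different guise (the paper uses $Se_{1}=0$, $Se_{2}=e_{1}+e_{2}$; you use $Se_{1}=0$, $Se_{2}=e_{2}$), and your $(3)\Rightarrow(1)$ (specialize to $A=T$, $v=e_{2}$, $m=2$) is exactly the paper's. Where you genuinely diverge is the architecture and the core implication $(2)\Rightarrow(3)$. The paper first proves $(2)\Rightarrow(1)$ directly, by a $2\times2$ computation whose engine is a scalar $w\notin\mathbb{Z}1$; it then proves $(2)\Rightarrow(3)$ by passing to the two-dimensional quotient $Y/M$ with $Y=\mathrm{span}\{(A-1)^{s}v:0\leq s<m\}$ and $M=\mathrm{span}\{(A-1)^{s}v:s\geq2\}$, applying the already-established statement $(1)$ to the induced pair $(\hat S,\hat A)$, and finishing with bookkeeping on the test vectors $u$, $(A-1)u$, $u+(A-1)u$. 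You instead prove $(2)\Rightarrow(3)$ self-containedly: restrict to the cyclic subspace, where $A$ is a single Jordan block $J_{n}(1)$, induct on the nilpotency index $n$, and let the field hypothesis enter only through the observation that binomial coefficients lie in the prime subring $\mathbb{Z}_{\mathbb{F}}\subsetneq\mathbb{F}$, with the one-parameter family $v_{0}+cv_{1}$ forcing the error term $\theta$ to vanish. This buys two things. First, your cycle $(1)\Rightarrow(2)\Rightarrow(3)\Rightarrow(1)$ makes the paper's direct proof of $(2)\Rightarrow(1)$ unnecessary. Second, and more substantively, your argument is scrupulous about the difference between an integer exponent and its image in $\mathbb{F}$: the paper's bookkeeping concludes ``$q=k$'' from quotient identities that only yield $\bar q=\bar k$ in $\mathbb{F}$, a step that is harmless in characteristic zero but unjustified over infinite fields of characteristic $p$ (there $\bar q=\bar k$ does not force $\binom{q}{r}=\binom{k}{r}$ for $r\geq2$, e.g.\ $\binom{3}{2}\neq\binom{5}{2}$ mod $2$); your prime-subring argument never needs to identify exponents, so it covers that case uniformly and in effect repairs this soft spot. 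The cost is a longer induction; when you write it up, state the base case $n=2$ explicitly (there the quotient/induction step is vacuous, since $Sv_{1}$ and $\beta A^{k}v_{1}$ both lie in $\mathrm{span}\{v_{1}\}$, so the parametrization $Sv_{1}=\beta A^{k}v_{1}+\theta v_{1}$ is automatic and only the test-vector step has content).
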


\begin{proof}
$\left(  1\right)  \Longrightarrow\left(  2\right)  $. Suppose $\left(
2\right)  $ is not true, and let $S=\left(
\begin{array}
[c]{cc}%
0 & 1\\
0 & 1
\end{array}
\right)  .$ Suppose $\left(
\begin{array}
[c]{c}%
x\\
y
\end{array}
\right)  \in\mathbb{F}^{2}$. If $y=0,$ then $S\left(
\begin{array}
[c]{c}%
x\\
y
\end{array}
\right)  =0=0T\left(
\begin{array}
[c]{c}%
x\\
y
\end{array}
\right)  $. Assume $y\neq0$, then there is a positive integer $m$ such that
\[
ym=1\text{ }\mathrm{mod}\text{ }p
\]
and let $n$ be a positive integer such that
\[
n=\left(  y-x\right)  m\text{ }\mathrm{mod}\text{ }p.
\]
Then we have
\[
T^{n}\left(
\begin{array}
[c]{c}%
x\\
y
\end{array}
\right)  =\left(
\begin{array}
[c]{cc}%
1 & n\\
0 & 1
\end{array}
\right)  \left(
\begin{array}
[c]{c}%
x\\
y
\end{array}
\right)  =\left(
\begin{array}
[c]{c}%
x+ny\\
y
\end{array}
\right)  =\left(
\begin{array}
[c]{c}%
x+\left(  y-x\right)  my\\
y
\end{array}
\right)  =S\left(
\begin{array}
[c]{c}%
x\\
y
\end{array}
\right)  .
\]
Hence $S\in\mathbb{F}$-OrbRef$_{0}\left(  T\right)  $, but $ST\neq TS,$ so
$\left(  1\right)  $ is not true.

$\left(  2\right)  \Longrightarrow\left(  1\right)  $. Suppose $\left(
2\right)  $ is true. Then we can choose $w\in\mathbb{F}$ so that
$w\notin\mathbb{Z}1.$ Suppose $S\in\mathbb{F}$-OrbRef$_{0}\left(  T\right)  .$
Then $S\left(
\begin{array}
[c]{c}%
1\\
0
\end{array}
\right)  =\left(
\begin{array}
[c]{c}%
a\\
0
\end{array}
\right)  $ for some $a\in\mathbb{F}$. Hence $S=\left(
\begin{array}
[c]{cc}%
a & b\\
0 & c
\end{array}
\right)  $ for some $b,c\in\mathbb{F}$. Suppose $c=0.$ Then
\[
\left(
\begin{array}
[c]{c}%
b\\
0
\end{array}
\right)  =S\left(
\begin{array}
[c]{c}%
0\\
1
\end{array}
\right)  =\alpha T^{n}\left(
\begin{array}
[c]{c}%
0\\
1
\end{array}
\right)  =\left(
\begin{array}
[c]{c}%
\alpha n\\
\alpha
\end{array}
\right)  ,
\]
so $\alpha=b=0.$ Now,
\[
\left(
\begin{array}
[c]{c}%
a\\
0
\end{array}
\right)  =S\left(
\begin{array}
[c]{c}%
1\\
1
\end{array}
\right)  =\alpha T^{n}\left(
\begin{array}
[c]{c}%
1\\
1
\end{array}
\right)  =\left(
\begin{array}
[c]{c}%
\alpha\left(  1+n\right) \\
\alpha
\end{array}
\right)  ,
\]
which implies $\alpha=0=a.$ Thus $c=0$ implies $S=0\in\mathbb{F}$-Orb$\left(
T\right)  .$ Hence we can assume $c\neq0.$

We now want to show $c=a$. Assume, via contradiction, that $c-a\neq0$. Then,
for some $\alpha\in\mathbb{F}$, and some integer $n\geq0$,%
\[
\left(
\begin{array}
[c]{c}%
a\frac{cw-b}{a-c}+b\\
c
\end{array}
\right)  =S\left(
\begin{array}
[c]{c}%
\frac{cw-b}{a-c}\\
1
\end{array}
\right)  =\alpha T^{n}\left(
\begin{array}
[c]{c}%
\frac{cw-b}{a-c}\\
1
\end{array}
\right)  =\alpha\left(
\begin{array}
[c]{c}%
\frac{cw-b}{a-c}+n\\
1
\end{array}
\right)  ,
\]
which implies that $\alpha=c\neq0$ and
\[
n=\frac{a}{c}\frac{cw-b}{a-c}+\frac{b}{c}-\frac{cw-b}{a-c}=w,
\]
which contradicts the choice of $w$. Thus $a=c$, so $S=\left(
\begin{array}
[c]{cc}%
a & b\\
0 & a
\end{array}
\right)  $ and $a\neq0$.

Finally we see that there is an $\alpha\in\mathbb{F}$ and an integer $n\geq0$
such that%
\[
\left(
\begin{array}
[c]{c}%
b\\
a
\end{array}
\right)  =S\left(
\begin{array}
[c]{c}%
0\\
1
\end{array}
\right)  =\alpha T^{n}\left(
\begin{array}
[c]{c}%
0\\
1
\end{array}
\right)  =\left(
\begin{array}
[c]{c}%
\alpha n\\
\alpha
\end{array}
\right)  ,
\]
which implies $b=na;$ whence, $S=aT^{n}$. Hence $\left(  1\right)  $ is true.

$\left(  3\right)  \Longrightarrow\left(  1\right)  $. Apply $\left(
3\right)  $ to the vector $v=\left(
\begin{array}
[c]{c}%
0\\
1
\end{array}
\right)  ,$ with $A=T$.

$\left(  2\right)  \Longrightarrow\left(  3\right)  $. Suppose $\left(
2\right)  $ is true. We can assume that $m$ is the smallest positive integer
for which $\left(  A-1\right)  ^{m}v=0$. It follows that $\left\{  \left(
A-1\right)  ^{s}v:0\leq s<m\right\}  $ is a linearly independent set whose
linear span $Y$ is an invariant subspace for $A.$ Similarly, the linear span
$M$ of $\left\{  \left(  A-1\right)  ^{s}v:s\geq2\right\}  $ is also an
invariant subspace for $A$. Since $S\in\mathbb{F}$-OrbRef$_{0}\left(
A\right)  $, we also have $S\left(  Y\right)  \subseteq Y$ and $S\left(
M\right)  \subseteq M$. Hence
\[
\hat{S}\left(  x+M\right)  =Sx+M\text{ and }\hat{A}\left(  x+M\right)  =Ax+M
\]
define linear transformations $\hat{S}$ and $\hat{A}$ on $Y/M$. It is easy to
see that $\hat{S}\in\mathbb{F}$-OrbRef$_{0}\left(  \hat{A}\right)  $ and the
matrix for $\hat{A}$ with respect to the basis $\left\{  \left(  A-1\right)
u+M,u+M\right\}  $ is $\left(
\begin{array}
[c]{cc}%
1 & 1\\
0 & 1
\end{array}
\right)  $ . Thus, by $\left(  1\right)  ,$ we know that there is a $\gamma
\in\mathbb{F}$ and an integer $t\geq0$ such that
\[
\hat{S}=\gamma\hat{A}^{t}.
\]
Thus,
\[
\hat{S}\left(  u+M\right)  =\gamma\left(  u+M\right)  +\gamma t\left(  \left(
A-1\right)  u+M\right)  .
\]
But $Su=\beta A^{k}u=\beta u+\beta k\left(  A-1\right)  u+h$ with $h\in M$.
Therefore,
\[
\hat{S}\left(  u+M\right)  =\beta\left(  u+M\right)  +\beta k\left(  \left(
A-1\right)  u+M\right)  ,
\]
which implies $\gamma=\beta$ and $\gamma t=\beta k$. On the other hand,
\[
\hat{S}\left(  \left(  A-1\right)  u+M\right)  =\gamma\left(  A-1\right)  u+M
\]
and, for some $\alpha$ and some $n$,%
\[
S\left(  A-1\right)  u=\alpha A^{n}\left(  A-1\right)  u=\alpha\left(
A-1\right)  u+h
\]
with $h\in M,$ which implies $\alpha=\gamma$. Hence if $\beta=0$, we have
$Su=0=S\left(  A-1\right)  u$. If $\beta\neq0,$ then $t=k$ and $\beta
=\gamma=\alpha$. However, there exist $\eta$ and $q$ such that
\[
S\left(  u+\left(  A-1\right)  u\right)  =\eta A^{q}\left(  A-1\right)  u=\eta
u+\eta\left(  q+1\right)  \left(  A-1\right)  u+g
\]
with $g\in M,$ which implies
\[
\hat{S}\left(  u+\left(  A-1\right)  u+M\right)  =\eta\left(  u+M\right)
+\eta\left(  q+1\right)  \left(  \left(  A-1\right)  u+M\right)  .
\]
Comparing this with%
\[
\hat{S}\left(  u+\left(  A-1\right)  u+M\right)  =\gamma\hat{A}^{t}\left(
u+\left(  A-1\right)  u+M\right)  =\beta\left(  u+M\right)  +\beta\left(
k+1\right)  \left(  \left(  A-1\right)  u+M\right)  ,
\]
we see that $\eta=\beta$ and $q=k.$ Hence,%
\[
S\left(  A-1\right)  u=S\left(  u+\left(  A-1\right)  u\right)  -Su=\beta
A^{k}\left(  u+\left(  A-1\right)  u\right)  -\beta A^{k}u=\beta A^{k}\left(
A-1\right)  u.
\]

\end{proof}

\bigskip

\begin{example}
\label{counter}Let $\omega_{n}=e^{\frac{2\pi i}{n}}$ for $n\geq1$. Let $Y$ be
a vector space over $\mathbb{C}$ with a basis $\left\{  e_{1},e_{2}%
,\ldots\right\}  $. Define linear transformations $A,B:Y\rightarrow Y$ by%
\[
Ae_{1}=0,Ae_{n+1}=e_{n}\text{ for }n\geq1,
\]
and%
\[
Be_{n}=\omega_{n}e_{n}\text{ for }n\geq1.
\]
Let $T=A\oplus B$ acting on $X=Y\oplus Y$. Let $S=0\oplus1$ acting on $X$.
Suppose $x\in X.$ Then there is a positive integer $n$ and scalars
$a_{1},b_{1},\ldots,a_{n},b_{n}\in\mathbb{C}$ such that
\[
x=\sum_{k=1}^{n}a_{k}e_{k}\oplus\sum_{k=1}^{n}b_{k}e_{k}.
\]
Then $Sx=T^{n!}x.$ However, there is no integer $N$ and scalar $\alpha$ such
that $S=\alpha T^{N}$, since $\alpha T^{N}\left(  e_{N+1}\oplus e_{N+1}%
\right)  \neq S\left(  e_{N+1}\oplus e_{N+1}\right)  $. Hence $T$ is neither
algebraically orbit reflexive nor algebraically $\mathbb{C}$-orbit reflexive.
\end{example}

\bigskip

The preceding example makes us look at the \emph{strict topology} on
$\mathcal{L}\left(  X\right)  ,$ where a basic neighborhood of a
transformation $T$ is given by a finite subset $E$ of $X$, defined by%
\[
U\left(  T,E\right)  =\left\{  A\in\mathcal{L}\left(  X\right)  :Ax=Tx\text{
for all }x\in E\right\}  .
\]
It is easy to show that if $S$ and $T$ are as in the preceding example, then
$T^{n}\rightarrow S$ in the strict topology. It is also easy to see that
$\mathbb{F}$-OrbRef$_{0}\left(  T\right)  $ and OrbRef$_{0}\left(  T\right)  $
are closed in the strict topology. It is natural to define a linear
transformation $T$ on a vector space $X$ over a field $\mathbb{F}$ to be
\emph{strictly algebraically }$\mathbb{F}$\emph{-orbit reflexive} if
$\mathbb{F}$-OrbRef$_{0}\left(  T\right)  $ is the strict closure of
$\mathbb{C}$-Orb$\left(  T\right)  $.

\bigskip

\bigskip

\begin{theorem}
\label{findim}Suppose $X$ is a finite-dimensional vector space over a field
$\mathbb{F}$ not isomorphic to $\mathbb{Z}/p\mathbb{Z}$ for some prime $p$.
Then every linear transformation on $X$ whose minimal polynomial splits over
$\mathbb{F}$ is algebraically $\mathbb{F}$-orbit reflexive.
\end{theorem}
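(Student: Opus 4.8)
The plan is to fix a Jordan decomposition and show that any $S\in\mathbb{F}\text{-OrbRef}_{0}(T)$ acts as one common scalar multiple of one common power of $T$ on every Jordan block; the matching of that scalar and that power across blocks will be forced by evaluating $S$ on a single ``diagonal'' test vector.

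The structural fact I would use throughout is that $S$ leaves invariant every $T$-invariant subspace: if $M$ is $T$-invariant and $x\in M$, then $Sx=\lambda T^{n}x\in M$. Since the minimal polynomial of $T$ splits over $\mathbb{F}$, the space decomposes as a direct sum $X=\bigoplus_{\ell}C_{\ell}$ of cyclic $T$-invariant Jordan blocks, where $C_{\ell}$ has eigenvalue $\mu_{\ell}$ and size $d_{\ell}$, with generator $e_{\ell}$ such that $\{(T-\mu_{\ell})^{s}e_{\ell}:0\le s<d_{\ell}\}$ is a basis. Each $C_{\ell}$, being $T$-invariant, is $S$-invariant, so $S|_{C_{\ell}}\in\mathbb{F}\text{-OrbRef}_{0}(T|_{C_{\ell}})$ and, in particular, $Se_{\ell}\in C_{\ell}$.

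I would first show that on each block $S$ is already a scalar times a power of $T$. If $\mu_{\ell}=0$ then $T|_{C_{\ell}}$ is nilpotent, so Theorem \ref{locnil} gives $S|_{C_{\ell}}=\beta_{\ell}T^{k_{\ell}}|_{C_{\ell}}$. If $\mu_{\ell}\neq0$ and $d_{\ell}=1$ the claim is immediate, since $Se_{\ell}=\lambda T^{k}e_{\ell}$ already determines $S|_{C_{\ell}}$. If $\mu_{\ell}\neq0$ and $d_{\ell}\geq2$, I would write $Se_{\ell}=\lambda T^{k}e_{\ell}$ and pass to $A=\mu_{\ell}^{-1}T$; one checks $S|_{C_\ell}\in\mathbb{F}\text{-OrbRef}_{0}(A|_{C_\ell})$ and that $A-1=\mu_{\ell}^{-1}(T-\mu_{\ell})$ is nilpotent on $C_{\ell}$. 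Because $\mathbb{F}\not\cong\mathbb{Z}/p\mathbb{Z}$, statement (3) of Lemma \ref{fdlem} is available; applying it with $v=e_{\ell}$ gives $S(T-\mu_{\ell})e_{\ell}=\lambda T^{k}(T-\mu_{\ell})e_{\ell}$, and re-applying it successively to $(T-\mu_{\ell})e_{\ell},(T-\mu_{\ell})^{2}e_{\ell},\dots$ propagates the identity down the block to yield $S|_{C_{\ell}}=\lambda T^{k}|_{C_{\ell}}$. Thus in every case $S|_{C_{\ell}}=\beta_{\ell}T^{k_{\ell}}|_{C_{\ell}}$ for some $\beta_{\ell}\in\mathbb{F}$ and $k_{\ell}\geq0$.

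The decisive step, which I expect to be the main obstacle, is to reconcile the block-by-block pairs $(\beta_{\ell},k_{\ell})$---which a priori need not agree---into one global pair. I would evaluate $S$ on the single vector $w=\sum_{\ell}e_{\ell}$: by definition of $\mathbb{F}\text{-OrbRef}_{0}(T)$ there are one scalar $\gamma$ and one exponent $p$ with $Sw=\gamma T^{p}w$. Comparing components in $X=\bigoplus_{\ell}C_{\ell}$, where $Se_{\ell}$ and $\gamma T^{p}e_{\ell}$ both lie in $C_{\ell}$, yields $Se_{\ell}=\gamma T^{p}e_{\ell}$ for every $\ell$ with the same $\gamma,p$. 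Since $e_{\ell}$ is cyclic for $T|_{C_{\ell}}$, whose minimal polynomial is $(t-\mu_{\ell})^{d_{\ell}}$, the vector equation $\gamma T^{p}e_{\ell}=\beta_{\ell}T^{k_{\ell}}e_{\ell}$ is equivalent to the congruence $\gamma t^{p}\equiv\beta_{\ell}t^{k_{\ell}}\pmod{(t-\mu_{\ell})^{d_{\ell}}}$, and hence to the operator identity $\gamma T^{p}|_{C_{\ell}}=\beta_{\ell}T^{k_{\ell}}|_{C_{\ell}}=S|_{C_{\ell}}$. As this holds on every block and $X=\bigoplus_{\ell}C_{\ell}$, I conclude $S=\gamma T^{p}\in\mathbb{F}\text{-Orb}(T)$, so $T$ is algebraically $\mathbb{F}$-orbit reflexive. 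The one point demanding care is precisely this cross-block reconciliation: it is the use of a single test vector $w$ producing one pair $(\gamma,p)$ valid simultaneously on all cyclic generators---after which cyclicity promotes each vector equality to an operator equality (a congruence modulo $t^{d_{\ell}}$ on the nilpotent blocks)---that rules out the mismatched behaviour exhibited in Example \ref{counter}.
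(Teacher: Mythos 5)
Your proof is correct, and although it invokes the same two ingredients as the paper---Theorem \ref{locnil} on nilpotent blocks and part (3) of Lemma \ref{fdlem} to push $Se_{\ell}=\lambda T^{k}e_{\ell}$ down a block with nonzero eigenvalue---the assembly is genuinely different. The paper normalizes the largest block among the nonzero eigenvalues to eigenvalue $1$, splits into cases according to whether $S$ vanishes on the nilpotent part, and matches data across summands with pairwise test vectors ($x+f$, $e_{1}+y$), reading off the scalar and exponent by comparing Jordan-basis coefficients: from $\rho T^{N}e_{1}=\alpha T^{d}e_{1}$ it concludes $\rho=\alpha$ and $\rho N=\alpha d$, hence $d=N$. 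You instead treat all blocks symmetrically (rescaling each non-nilpotent block by $\mu_{\ell}^{-1}$ so that Lemma \ref{fdlem}(3) applies) and do the cross-block reconciliation in one stroke, via the single diagonal vector $w=\sum_{\ell}e_{\ell}$ together with the observation that two polynomials in $T$ agreeing on a cyclic vector of $T|_{C_{\ell}}$ agree on all of $C_{\ell}$. This buys real robustness: your argument never needs to equate integer exponents, only operators. The paper's inference ``$\rho N=\alpha d$ and $\rho\neq0$, hence $d=N$'' is an identity in $\mathbb{F}$, so it forces $d=N$ only in characteristic zero; over the positive-characteristic fields the theorem permits (e.g. $\mathbb{F}_{p^{2}}$ or $\mathbb{F}_{p}(t)$) it yields only $d\equiv N\pmod p$, and the exponent produced by the test vector $e_{1}+y$ could a priori vary with $y$, so that step needs extra repair there. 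Your cyclicity argument is characteristic-free: the final pair $(\gamma,p)$ comes from the one vector $w$, and each block's agreement with $\gamma T^{p}$ is an operator identity obtained from the congruence modulo $(t-\mu_{\ell})^{d_{\ell}}$, which closes exactly the mismatched-exponent loophole that Example \ref{counter} exhibits in infinite dimensions. What the paper's route buys in exchange is the pointwise uniqueness it inherits from anchoring on the nilpotent part (if $Sf=\beta T^{k}f\neq0$ then $S=\beta T^{k}$), the form of the statement that is recycled in the normed-space results of Section 3; your proof does not produce that refinement, but the theorem as stated does not require it.
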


\begin{proof}
Since the minimal polynomial for $T$ splits over $\mathbb{F}$, we can assume
$T$ has a Jordan canonical form. Moreover, if $T$ is nilpotent, then, by
Theorem \ref{locnil}, $T$ is algebraically $\mathbb{F}$-orbit reflexive. Thus
we can assume that $T$ has at least one nonzero eigenvalue $\lambda$ with
largest Jordan block of size $m$, which we can assume is $1.$ We can write
\[
T=\left(  1+J_{m}\right)  \oplus%
{\displaystyle\sum\nolimits_{1\leq i\leq s}^{\oplus}}
\left(  \alpha_{i}+J_{m_{i}}\right)  \oplus%
{\displaystyle\sum\nolimits_{1\leq j\leq t}^{\oplus}}
J_{n_{j}}%
\]
with $\alpha_{1},\ldots,\alpha_{s}$ nonzero and $n_{1}\geq\cdots\geq n_{t}$,
$m\geq m_{1}\geq\cdots\geq m_{s}$. Suppose $S\in\mathbb{F}$-OrbRef$_{0}\left(
T\right)  .$ First suppose there is a nonzero $f$ in the domain of $%
{\displaystyle\sum\nolimits_{1\leq j\leq t}^{\oplus}}
J_{n_{j}}$ and a $\beta\in\mathbb{F}$ and an integer $k\geq0$ such that
$Sf=\beta T^{k}f$. Then, by Theorem \ref{locnil}, this uniquely defines $k$
and $\beta$ and uniquely defines $S=\beta T^{k}$ on the domain of the
nilpotent transformation $%
{\displaystyle\sum\nolimits_{1\leq j\leq t}^{\oplus}}
J_{n_{j}}$. If $x$ is in the domain of $\left(  1+J_{m}\right)  \oplus%
{\displaystyle\sum\nolimits_{1\leq i\leq s}^{\oplus}}
\left(  \alpha_{i}+J_{m_{i}}\right)  $, then there is an integer $n$ and a
scalar $\gamma$ such that $S\left(  x+f\right)  =\gamma T^{n}\left(
x+f\right)  .$ But $S\left(  x+f\right)  =Sx+Sf$ and $\gamma T^{n}\left(
x+f\right)  =\gamma T^{n}x+\gamma T^{n}f.$ It follows that $Sx=\gamma T^{n}x$
and $Sf=\gamma T^{n}$, which implies $\gamma=\beta$ and $n=k.$ Hence,
$Sx=\beta T^{k}x.$ Therefore $\ker\left(  S-\beta T^{k}\right)  $ contains the
domains of both $\left(  1+J_{m}\right)  \oplus%
{\displaystyle\sum\nolimits_{1\leq i\leq s}^{\oplus}}
\left(  \alpha_{i}+J_{m_{i}}\right)  $ and $%
{\displaystyle\sum\nolimits_{1\leq i\leq t}^{\oplus}}
J_{n_{j}}$, which implies $S=\beta T^{k}$.

We now consider the case in which $S=0$ on the domain of $%
{\displaystyle\sum\nolimits_{1\leq j\leq t}^{\oplus}}
J_{n_{j}}$. Choose a vector $g$ in the domain of $J_{n_{1}}$such that
$J_{n_{1}}^{n_{1}-1}g\neq0$. If $m=1,$ then $m_{1}=\cdots=m_{s}=1$ and
$\left(  1+J_{m}\right)  \oplus%
{\displaystyle\sum\nolimits_{1\leq i\leq s}^{\oplus}}
\left(  \alpha_{i}+J_{m_{i}}\right)  $ is a diagonal matrix with eigenvectors
$u,u_{1},\ldots,u_{s}$ and there is a scalar $\eta$ and an integer $r$ such
that%
\[
S\left(  u+u_{1}+\cdots+u_{s}+g\right)  =\eta T^{r}\left(  u+u_{1}%
+\cdots+u_{s}+g\right)  ,
\]
which implies that $Su=\eta T^{r}u,Su_{i}=\eta T^{r}u_{i}$ $\left(  1\leq
i\leq s\right)  ,$ and $0=Sg=\eta T^{r}g,$ which implies $\eta T^{r}=0$ on the
domain of $%
{\displaystyle\sum\nolimits_{1\leq j\leq t}^{\oplus}}
J_{n_{j}}$, and, hence, $S=\eta T^{r}$.

We are left with the case where $S=0$ on the domain of $%
{\displaystyle\sum\nolimits_{1\leq j\leq t}^{\oplus}}
J_{n_{j}}$ and $m\geq2$. Let $\left\{  e_{1},\ldots,e_{m}\right\}  $ be the
basis shifted by $J_{m}$. Then there is a scalar $\rho$ and an integer
$N\geq0$ such that $Se_{1}=\rho T^{N}e_{1}$. It follows from part $\left(
3\right)  $ of Lemma \ref{fdlem} that $Se_{j}=\rho T^{N}e_{j}$ for $1\leq
j\leq m$. Suppose $y$ is in the domain of $%
{\displaystyle\sum\nolimits_{1\leq i\leq s}^{\oplus}}
\left(  \alpha_{i}+J_{m_{i}}\right)  \oplus%
{\displaystyle\sum\nolimits_{1\leq j\leq t}^{\oplus}}
J_{n_{j}}$, then there is a scalar $\alpha$ and an integer $d\geq0$ such that
\[
Se_{1}+Sy=S\left(  e_{1}+y\right)  =\alpha T^{d}\left(  e_{1}+y\right)
=\alpha T^{d}e_{1}+\alpha T^{d}y,
\]
and it follows that
\[
\rho T^{N}e_{1}=\alpha T^{d}e_{1}\text{ and }Sy=\alpha T^{d}y.
\]
However, the representation of $\rho T^{N}e_{1}$ with respect to the basis
$\left\{  e_{1},\ldots,e_{m}\right\}  $ is $\rho e_{1}+\rho Ne_{2}+\cdots$ and
the expansion for $\alpha T^{d}e_{1}$ is $\alpha e_{1}+\alpha de_{2}+\cdots,$
so $\alpha=\rho,$ and if $\alpha=\rho=0,$ then $Sy=0=\rho T^{N}y,$ and if
$\rho\neq0,$ then $d=N$ and $Sy=\rho T^{N}y.$ Hence $S=\rho T^{N}$.
\end{proof}

\bigskip

\begin{corollary}
If $X$ is a finite-dimensional vector space on an algebraically closed field
$\mathbb{F}$, then every linear transformation on $X$ is $\mathbb{F}%
$-algebraically reflexive. \bigskip
\end{corollary}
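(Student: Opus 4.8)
The plan is to derive this as an immediate consequence of Theorem \ref{findim}. That theorem already establishes algebraic $\mathbb{F}$-orbit reflexivity for every transformation on a finite-dimensional space provided two hypotheses hold: that $\mathbb{F}$ is not isomorphic to $\mathbb{Z}/p\mathbb{Z}$ for any prime $p$, and that the minimal polynomial of the transformation splits over $\mathbb{F}$. So the entire task reduces to verifying that algebraic closedness forces both hypotheses.

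First I would check the second hypothesis, which is the more transparent one. By definition, an algebraically closed field is one over which every nonconstant polynomial has a root, and hence every polynomial factors completely into linear factors. In particular the minimal polynomial of any $T \in \mathcal{L}(X)$ splits over $\mathbb{F}$, so this condition is automatic and requires no work.

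Next I would address the first hypothesis by recalling the standard fact that every algebraically closed field is infinite. The quickest argument is by contradiction: if $\mathbb{F}$ were finite with distinct elements $a_1,\ldots,a_n$, then the polynomial $\prod_{i=1}^{n}(t-a_i)+1$ is nonconstant yet has no root in $\mathbb{F}$, contradicting algebraic closedness. Since the fields $\mathbb{Z}/p\mathbb{Z}$ are all finite, an infinite field cannot be isomorphic to any of them, so this hypothesis is satisfied as well.

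With both hypotheses in hand, the conclusion is immediate: Theorem \ref{findim} applies to every linear transformation on $X$, giving algebraic $\mathbb{F}$-orbit reflexivity in each case. I do not anticipate any genuine obstacle here, since the corollary is a direct specialization; the only point worth stating explicitly is the infinitude of algebraically closed fields, which guarantees we avoid the pathological prime-field case singled out in Lemma \ref{fdlem}.
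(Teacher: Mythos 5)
Your proof is correct and follows exactly the route the paper intends: the corollary is stated as a direct specialization of Theorem \ref{findim}, with algebraic closedness guaranteeing both that the minimal polynomial splits and that $\mathbb{F}$ is infinite (hence not isomorphic to any $\mathbb{Z}/p\mathbb{Z}$), which is the same observation the paper makes explicitly in its later theorem on strict algebraic $\mathbb{F}$-orbit reflexivity. Your inclusion of the standard argument that algebraically closed fields are infinite is a welcome bit of explicitness that the paper leaves tacit.
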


Recall from ring theory that if $\mathcal{R}$ is a principle ideal domain and
$M$ is an $\mathcal{R}$-module and $0\neq r\in\mathcal{R}$ and $rM=\left\{
0\right\}  ,$ then $M$ is a direct sum of cyclic $\mathcal{R}$-modules;
Applying this fact to $\mathcal{R}=\mathbb{F}\left[  t\right]  $, we get that
any algebraic linear transformation on a vector space is a direct sum of
transformations on finite-dimensional subspaces, and therefore has a Jordan
form when the minimal polynomial splits over $\mathbb{F}$. (See \cite{Kap} for
details.) This gives us the following corollary.

\bigskip

\begin{corollary}
\label{algTrans}Suppose $X$ is a vector space over a field $\mathbb{F}$ not
isomorphic to $\mathbb{Z}/p\mathbb{Z}$ for some prime $p$. Then every
algebraic linear transformation on $X$ whose minimal polynomial splits over
$\mathbb{F}$ is algebraically $\mathbb{F}$-orbit reflexive.
\end{corollary}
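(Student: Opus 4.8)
The plan is to deduce this infinite-dimensional statement from the finite-dimensional Theorem~\ref{findim} by covering $X$ with finite-dimensional invariant subspaces. Since $T$ is algebraic, its minimal polynomial $m$ is a single polynomial which, by hypothesis, splits as $m(t)=\prod_{i=1}^{r}(t-\lambda_{i})^{e_{i}}$; in particular $T$ has only finitely many eigenvalues and every Jordan block has size at most $\max_{i}e_{i}$. By the structure theorem quoted above, $X$ is a direct sum of Jordan blocks, each of one of the finitely many \emph{types} $(\lambda_{i},s)$ with $1\le s\le e_{i}$. First I would record the two elementary facts that drive the argument: for any finite set $F\subseteq X$ the $T$-invariant subspace $\langle F\rangle_{T}$ it generates is finite-dimensional (because $m(T)=0$ bounds the length of each orbit), and if $S\in\mathbb{F}$-OrbRef$_{0}(T)$ then every $T$-invariant subspace is automatically $S$-invariant, since $Sx\in\mathbb{F}$-$Orb(T,x)\subseteq\langle x\rangle_{T}$ for each $x$.

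Next I would fix a single finite-dimensional \emph{anchor}. Choosing one Jordan block of each type that actually occurs in $X$ and letting $M_{0}$ be their direct sum, $M_{0}$ is finite-dimensional, $T$-invariant, and $S$-invariant. The minimal polynomial of $T|_{M_{0}}$ divides $m$, hence splits, so Theorem~\ref{findim} applies to $T|_{M_{0}}$ and yields $S|_{M_{0}}=\beta_{0}(T|_{M_{0}})^{k_{0}}$ for some $\beta_{0}\in\mathbb{F}$ and $k_{0}\ge0$. The point of $M_{0}$ is that it \emph{separates monomials}: because a monomial $\beta T^{k}$ acts on a Jordan block by a matrix depending only on the block's type $(\lambda,s)$, two monomials that agree on $M_{0}$ (which contains a representative of every occurring type) agree on every block of $X$, hence on all of $X$.

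Finally, for an arbitrary $h\in X$ I would set $M=\langle M_{0}\cup\{h\}\rangle_{T}$, which is again finite-dimensional, $T$- and $S$-invariant, with splitting minimal polynomial. Theorem~\ref{findim} gives $S|_{M}=\beta(T|_{M})^{k}$ for some $\beta,k$; restricting this identity to $M_{0}\subseteq M$ and comparing with $S|_{M_{0}}=\beta_{0}(T|_{M_{0}})^{k_{0}}$ shows that $\beta T^{k}$ and $\beta_{0}T^{k_{0}}$ agree on $M_{0}$. By the separation property they agree on all of $X$, so in particular $Sh=\beta T^{k}h=\beta_{0}T^{k_{0}}h$. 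As $h$ was arbitrary, $S=\beta_{0}T^{k_{0}}\in\mathbb{F}$-$Orb(T)$, which is exactly algebraic $\mathbb{F}$-orbit reflexivity. I expect the main obstacle to be the separation property: one must verify carefully that finitely many block types suffice and that the action of $\beta T^{k}$ on a block is determined by its type, so that a single finite-dimensional $M_{0}$ can force global agreement of monomials. The hypotheses $\mathbb{F}\not\cong\mathbb{Z}/p\mathbb{Z}$ and the splitting of $m$ then enter only through the invocations of Theorem~\ref{findim} (and, inside it, Lemma~\ref{fdlem}).
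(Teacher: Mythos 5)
Your proof is correct, and it supplies something the paper does not: the paper gives no argument for this corollary at all, merely recording the PID structure theorem (so that $X$ is a direct sum of finite-dimensional Jordan blocks, of at most $\deg m$ distinct types $(\lambda_i,s)$ with $s\le e_i$) and asserting that Theorem~\ref{findim} then gives the result. The unstated idea is presumably that the proof of Theorem~\ref{findim} carries over to an infinite direct sum of Jordan blocks, but it does not do so verbatim: in the subcase $m=1$ of that proof one evaluates $S$ at $u+u_1+\cdots+u_s+g$, a sum containing one eigenvector from \emph{every} block with nonzero eigenvalue, and when there are infinitely many blocks no such vector exists. Your anchor-and-separation device is exactly the needed repair, and it is cleaner than patching that proof line by line, because it uses Theorem~\ref{findim} purely as a black box on finite-dimensional invariant subspaces: the anchor $M_0$ (one block of each occurring type) pins down a single monomial $\beta_0 T^{k_0}$; the observation that $\beta T^{k}$ acts on a block through a matrix determined only by $(\beta,k,\lambda,s)$ makes agreement on $M_0$ propagate to all of $X$; and each $h\in X$ is then handled inside the finite-dimensional $T$- and $S$-invariant subspace $\langle M_0\cup\{h\}\rangle_T$. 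The supporting steps all check out: $m(T)=0$ bounds orbit lengths, so these subspaces are finite-dimensional; $T$-invariant subspaces are automatically $S$-invariant because $Sx\in\mathbb{F}\text{-}Orb(T,x)$; and minimal polynomials of restrictions divide $m$, hence split, so Theorem~\ref{findim} applies each time (this is where the hypotheses on $\mathbb{F}$ and on $m$ enter, as you say). So your route is the same in spirit as what the paper intends --- reduce to Theorem~\ref{findim} via the Jordan decomposition --- but it is a genuinely complete globalization argument where the paper has only an assertion.
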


\bigskip

The next corollary follows from the technique in the last paragraph of the
proof of Theorem \ref{findim}. Recall from the beginning of Section 2 that if
$T$ is locally algebraic and $x$ is a vector, then the \emph{local minimal
polynomial} for $T$ at $x$ is the unique monic polynomial $p\left(  t\right)
$ of minimal degree for which $p\left(  T\right)  x=0$.

\begin{corollary}
Suppose $X$ is a vector space over a field $\mathbb{F}$ that is not isomorphic
to $\mathbb{Z}/p\mathbb{Z}$ for some prime $p$, and suppose $T$ is a locally
algebraic linear transformation on $X$ whose local minimal polynomials split
over $\mathbb{F}$. If there is a nonzero $\lambda\in\mathbb{F}$ such that
$\ker\left(  T-\lambda\right)  \neq\ker\left(  T-\lambda\right)  ^{2},$ then
$T$ is algebraically $\mathbb{F}$-orbit reflexive.
\end{corollary}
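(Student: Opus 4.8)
The plan is to reduce everything to the finite-dimensional Theorem~\ref{findim} by a localization argument, and then to glue the local data together using the Jordan chain of length at least two sitting at the nonzero eigenvalue $\lambda$; this gluing is exactly the two-coefficient comparison performed in the last paragraph of the proof of Theorem~\ref{findim}. Throughout, I fix $S\in\mathbb{F}$-OrbRef$_{0}\left(T\right)$, and the goal is to produce a single pair $\left(\beta,k\right)$ with $S=\beta T^{k}$.

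First I would extract the chain and the candidate data. Since $\ker\left(T-\lambda\right)\neq\ker\left(T-\lambda\right)^{2}$, choose $f$ with $\left(T-\lambda\right)^{2}f=0$ and $e:=\left(T-\lambda\right)f\neq0$. A one-line check shows $\left\{f,e\right\}$ is linearly independent, $Te=\lambda e$, $Tf=\lambda f+e$, and hence $T^{j}f=\lambda^{j}f+j\lambda^{j-1}e$ for all $j\geq0$. Because $S\in\mathbb{F}$-OrbRef$_{0}\left(T\right)$ there are $\beta\in\mathbb{F}$ and $k\geq0$ with $Sf=\beta T^{k}f$; these become the global data.

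Next I would localize. Given an arbitrary $x\in X$, let $M$ be the linear span of $\left\{T^{j}f:j\geq0\right\}\cup\left\{T^{j}x:j\geq0\right\}$. Local algebraicity makes $M$ finite-dimensional, and $M$ is $T$-invariant; moreover $S$ carries each vector into its own $T$-orbit, so $S$ leaves invariant every $T$-invariant subspace, in particular $M$, and $S|_{M}\in\mathbb{F}$-OrbRef$_{0}\left(T|_{M}\right)$. The minimal polynomial of $T|_{M}$ divides the least common multiple of the local minimal polynomials of $f$ and $x$, so it splits over $\mathbb{F}$; since $\mathbb{F}$ is not isomorphic to $\mathbb{Z}/p\mathbb{Z}$, Theorem~\ref{findim} applies and yields $\gamma\in\mathbb{F}$ and $d\geq0$ (depending on $x$) with $Sy=\gamma T^{d}y$ for all $y\in M$. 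In particular $\gamma T^{d}f=Sf=\beta T^{k}f$.

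Finally I would reconcile the local data with $\left(\beta,k\right)$ using the chain, which is the crux. Equating coefficients of the independent pair $\left\{f,e\right\}$ in $\gamma T^{d}f=\beta T^{k}f$ gives $\gamma\lambda^{d}=\beta\lambda^{k}$ and $\gamma d\lambda^{d-1}=\beta k\lambda^{k-1}$. If $\beta=0$, then $\lambda\neq0$ forces $\gamma=0$, so $Sx=\gamma T^{d}x=0=\beta T^{k}x$. If $\beta\neq0$, then $\gamma\neq0$, and eliminating $\gamma$ between the two relations (legitimate because $\lambda\neq0$) forces $d=k$ and then $\gamma=\beta$; hence $\gamma T^{d}=\beta T^{k}$ and $Sx=\beta T^{k}x$. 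As $x$ was arbitrary, $S=\beta T^{k}\in\mathbb{F}$-Orb$\left(T\right)$. I expect the delicate point to be precisely this reconciliation: both hypotheses on $\lambda$ are indispensable here, since the nonvanishing of $\lambda$ and the chain length at least two together supply the two independent coefficients needed to solve simultaneously for the scalar and the exponent and to force the exponent to be the same integer $k$ for every $x$. In the situations excluded by the hypothesis --- the purely nilpotent directions, or nonzero eigenvalues all of whose blocks have size one --- this rigidity disappears and the pair $\left(\gamma,d\right)$ may genuinely vary with $x$, which is the mechanism underlying non-orbit-reflexive examples such as Example~\ref{counter}.
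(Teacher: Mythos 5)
Your strategy is exactly the paper's intended one. The paper's entire proof of this corollary is the remark that it ``follows from the technique in the last paragraph of the proof of Theorem \ref{findim}'': fix a chain vector of length two at the nonzero eigenvalue and glue the local scalar/exponent data by comparing the coefficients of $f$ and $e=(T-\lambda)f$. Your localization to the finite-dimensional invariant subspace $M$ spanned by the two orbits (the same device the paper uses in the second half of the proof of Theorem \ref{locnil}), followed by Theorem \ref{findim} on $M$ and that two-coefficient comparison, is a faithful implementation of this plan, and over a field of characteristic zero it is complete and correct.

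There is, however, a genuine gap in positive characteristic, which the hypothesis permits: $\mathbb{F}$ only has to differ from $\mathbb{Z}/p\mathbb{Z}$, so fields such as $\mathbb{F}_{4}$, $\overline{\mathbb{F}_{p}}$, or $\mathbb{F}_{p}(t)$ are in scope. The step ``eliminating $\gamma$ \ldots\ forces $d=k$'' fails there: the relations $\gamma\lambda^{d}=\beta\lambda^{k}$ and $\gamma d\lambda^{d-1}=\beta k\lambda^{k-1}$ only yield $d\cdot 1_{\mathbb{F}}=k\cdot 1_{\mathbb{F}}$, i.e.\ $d\equiv k\pmod{p}$, together with $\gamma=\beta\lambda^{k-d}$, and this does not give $\gamma T^{d}=\beta T^{k}$. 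Concretely, take $\mathbb{F}=\mathbb{F}_{p}(t)$ and let $T$ act on $\mathbb{F}^{3}$ as a $2\times2$ unipotent Jordan block (so $\lambda=1$, with chain $f,e$) direct sum the $1\times1$ block $(t)$, and let $S=T^{p+1}$. Then $Sf=f+(p+1)e=Tf$, so your recipe is free to fix $(\beta,k)=(1,1)$, while Theorem \ref{findim} applied to $M=\mathbb{F}^{3}$ necessarily returns $(\gamma,d)=(1,p+1)$; both of your displayed relations hold, yet $S\neq\beta T^{k}=T$, since they differ on the third coordinate. Thus the argument run verbatim derives a false identity (note $S\in\mathbb{F}$-Orb$\left(T\right)$ here, so this refutes the argument, not the corollary). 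To be fair, the paper's own argument makes the identical move --- in the last paragraph of the proof of Theorem \ref{findim}, from $\alpha=\rho\neq0$ and $\rho N=\alpha d$ it concludes ``$d=N$'' --- so you have reproduced the intended proof, gap included; but as written both arguments establish the corollary only in characteristic zero, and covering characteristic $p$ requires an extra ingredient, e.g.\ not fixing $\left(\beta,k\right)$ from $f$ alone and exploiting that the exponent ambiguity coming from unipotent parts is only modulo $p$ while finite multiplicative orders of eigenvalues are prime to $p$.
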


The next corollary follows from the fact that if $T$ is a locally algebraic
linear transformation and $E$ is any finite set of vectors, then there is a
finite-dimensional invariant subspace $M$ for $T$ that contains $E$.

\begin{corollary}
\label{lafd}Suppose $X$ is a vector space over a field $\mathbb{F}$ that is
not isomorphic to $\mathbb{Z}/p\mathbb{Z}$ for some prime $p$, and suppose $T$
is a locally algebraic linear transformation on $X$ whose local minimal
polynomials split over $\mathbb{F}$. Then $T$ is strictly algebraically
$\mathbb{F}$-orbit reflexive.
\end{corollary}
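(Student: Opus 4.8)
The plan is to globalize the finite-dimensional result of Theorem~\ref{findim} using the local-finiteness of locally algebraic transformations recorded just before the statement, all inside the strict topology. Throughout, $\mathbb{F}$-Orb$(T)$ denotes $\{\lambda T^{n}:n\geq0,\ \lambda\in\mathbb{F}\}$, and we must show $\mathbb{F}$-OrbRef$_{0}(T)$ is exactly the strict closure of $\mathbb{F}$-Orb$(T)$.

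First I would dispose of the easy inclusion. Since any $\lambda T^{n}\in\mathbb{F}$-Orb$(T)$ satisfies $(\lambda T^{n})x=\lambda T^{n}x\in\mathbb{F}$-Orb$(T,x)$ for every $x\in X$, we have $\mathbb{F}$-Orb$(T)\subseteq\mathbb{F}$-OrbRef$_{0}(T)$; and, as noted in the text, $\mathbb{F}$-OrbRef$_{0}(T)$ is closed in the strict topology. Hence the strict closure of $\mathbb{F}$-Orb$(T)$ is contained in $\mathbb{F}$-OrbRef$_{0}(T)$.

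For the reverse inclusion, fix $S\in\mathbb{F}$-OrbRef$_{0}(T)$ and a basic strict neighborhood $U(S,E)$ determined by a finite set $E\subseteq X$; I must produce an element of $\mathbb{F}$-Orb$(T)$ that agrees with $S$ on $E$. Because $T$ is locally algebraic, there is a finite-dimensional $T$-invariant subspace $M$ with $E\subseteq M$. The three local verifications I would carry out are: (i) $S(M)\subseteq M$, since each $Sx=\lambda_{x}T^{n_{x}}x$ lies in $M$ by invariance of $M$ under $T$, so that $S|_{M}$ is a well-defined transformation of $M$; (ii) $S|_{M}\in\mathbb{F}$-OrbRef$_{0}(T|_{M})$, because for $x\in M$ the vectors $T^{n}x$ are the same computed in $M$ or in $X$; and (iii) the minimal polynomial of $T|_{M}$ splits over $\mathbb{F}$, since it is the least common multiple of the local minimal polynomials of a finite generating set of $M$, each of which splits over $\mathbb{F}$ by hypothesis. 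With these in hand, Theorem~\ref{findim} applies to $T|_{M}$ on the finite-dimensional space $M$ over $\mathbb{F}$ (which is not isomorphic to $\mathbb{Z}/p\mathbb{Z}$), so $T|_{M}$ is algebraically $\mathbb{F}$-orbit reflexive and $S|_{M}=\lambda(T|_{M})^{n}$ for some $\lambda\in\mathbb{F}$ and $n\geq0$. Then $\lambda T^{n}$ agrees with $S$ on all of $M$, in particular on $E$, so $\lambda T^{n}\in U(S,E)\cap\mathbb{F}$-Orb$(T)$. As $E$ was arbitrary, $S$ lies in the strict closure of $\mathbb{F}$-Orb$(T)$.

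I expect no serious obstacle here: the real content is carried by Theorem~\ref{findim}, and this corollary is a routine passage to the strict topology. The only points that demand care are the local verifications (i)--(iii), and among these the least automatic is (iii), confirming that the minimal polynomial of the restriction $T|_{M}$ splits; this is exactly where the splitting hypothesis on the local minimal polynomials is used, via the fact that a least common multiple of polynomials that split over $\mathbb{F}$ again splits over $\mathbb{F}$.
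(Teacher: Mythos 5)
Your proof is correct and is essentially the paper's own argument: the paper justifies this corollary with the single remark that any finite set $E$ lies in a finite-dimensional $T$-invariant subspace $M$, which is exactly your reduction, and your verifications (i)--(iii) followed by an application of Theorem~\ref{findim} to $T|_{M}$ simply make that one-line proof explicit. In particular, your point (iii) (divisors and least common multiples of split polynomials split) is the right way to see that the hypothesis of Theorem~\ref{findim} is inherited by the restriction.
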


\begin{theorem}
If $\mathbb{F}$ is an algebraically closed field, then every linear
transformation on a vector space over $\mathbb{F}$ is strictly algebraically
$\mathbb{F}$-orbit reflexive.
\end{theorem}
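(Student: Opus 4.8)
The plan is to reduce the statement to results already in hand by splitting on the exhaustive dichotomy ``$T$ locally algebraic'' versus ``$T$ not locally algebraic'': either every vector is annihilated by some nonzero polynomial in $T$, or there is a vector $e$ for which $\left\{T^{n}e:n\geq0\right\}$ is linearly independent.

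Before treating the two cases I would record an elementary observation that makes the strict conclusion follow painlessly from the ordinary one. Since $\mathbb{F}$-Orb$\left(T\right)\subseteq\mathbb{F}$-OrbRef$_{0}\left(T\right)$ and, as noted just before Theorem \ref{findim}, the set $\mathbb{F}$-OrbRef$_{0}\left(T\right)$ is closed in the strict topology, the strict closure of $\mathbb{F}$-Orb$\left(T\right)$ is trapped between $\mathbb{F}$-Orb$\left(T\right)$ and $\mathbb{F}$-OrbRef$_{0}\left(T\right)$. Hence, whenever $\mathbb{F}$-OrbRef$_{0}\left(T\right)=\mathbb{F}$-Orb$\left(T\right)$, all three sets coincide and $T$ is automatically strictly algebraically $\mathbb{F}$-orbit reflexive; that is, ordinary algebraic $\mathbb{F}$-orbit reflexivity always upgrades to the strict version.

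With this in place, the first case is that $T$ is not locally algebraic. An algebraically closed field is infinite, so Theorem \ref{notlocalg} applies and gives $\mathbb{F}$-OrbRef$_{0}\left(T\right)=\mathbb{F}$-Orb$\left(T\right)$; the squeezing observation of the previous paragraph then finishes this case. In the remaining case $T$ is locally algebraic. Because $\mathbb{F}$ is algebraically closed, every local minimal polynomial of $T$ splits over $\mathbb{F}$, and $\mathbb{F}$, being infinite, is not isomorphic to any $\mathbb{Z}/p\mathbb{Z}$. Thus the hypotheses of Corollary \ref{lafd} are exactly met, and that corollary delivers the conclusion that $T$ is strictly algebraically $\mathbb{F}$-orbit reflexive.

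I expect no serious obstacle here, since the substance has been front-loaded into the locally algebraic machinery (Lemma \ref{fdlem}, Theorem \ref{findim}, and Corollary \ref{lafd}) and into Theorem \ref{notlocalg}. The only point deserving a moment's care is the passage from ordinary to strict reflexivity in the non-locally-algebraic case, which is precisely what the strict-closedness of $\mathbb{F}$-OrbRef$_{0}\left(T\right)$ supplies once one checks the two trivial inclusions.
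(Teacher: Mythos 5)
Your proof is correct and follows essentially the same route as the paper's: the same dichotomy between locally algebraic and not locally algebraic, with Theorem \ref{notlocalg} handling the first case and Corollary \ref{lafd} the second. Your explicit squeezing argument (that ordinary algebraic $\mathbb{F}$-orbit reflexivity upgrades to the strict version because $\mathbb{F}$-OrbRef$_{0}\left(T\right)$ is strictly closed and contains $\mathbb{F}$-Orb$\left(T\right)$) is a point the paper leaves implicit, and it is a worthwhile clarification rather than a different approach.
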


\begin{proof}
Since $\mathbb{F}$ is algebraically closed, we know $\mathbb{F}$ is infinite
and is therefore not isomorphic to $\mathbb{Z}/p\mathbb{Z}$ for some prime
number $p$. Suppose $X$ is a vector space and $T$ is a linear transformation
on $X$. If $T$ is not locally algebraic, then $T$ is algebraically
$\mathbb{F}$-orbit reflexive. If $T$ is locally algebraic, then, by Corollary
\ref{lafd}, $T$ is strictly algebraically $\mathbb{F}$-orbit reflexive.
\end{proof}

\bigskip\ 

\section{$\mathbb{F}$-orbit reflexivity with $\mathbb{F}=\mathbb{C}$ or
$\mathbb{F=R}$\bigskip}

\begin{proposition}
Every normal operator is $\mathbb{C}$-orbit reflexive.
\end{proposition}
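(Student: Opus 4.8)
The plan is to prove the two inclusions defining $\mathbb{C}$-orbit reflexivity separately. The inclusion of the SOT-closure of $\mathbb{C}$-$Orb(N)$ into $\mathbb{C}$-$\mathrm{OrbRef}(N)$ is automatic and holds for every operator: if $S=\mathrm{SOT}\text{-}\lim_j\lambda_jN^{n_j}$, then $Sx=\lim_j\lambda_jN^{n_j}x\in\mathbb{C}\text{-}Orb(N,x)^{-}$ for each $x$. So the entire content is the reverse inclusion: given $S\in\mathbb{C}$-$\mathrm{OrbRef}(N)$, I must produce a single sequence $(\lambda_j,n_j)$ with $\lambda_jN^{n_j}\to S$ strongly. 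Since strong closedness is tested on finite sets of vectors, it suffices to fix $x_1,\dots,x_k$ and approximate $Sx_1,\dots,Sx_k$ \emph{simultaneously}.

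First I would pin down the form of $S$. For each $x$ the hypothesis gives $Sx\in\mathbb{C}\text{-}Orb(N,x)^{-}\subseteq H_x:=\overline{\mathrm{span}}\{N^nx:n\geq0\}$, and each $H_x$ is invariant for the unital algebra $\mathcal{A}$ generated by $N$. Hence $S$ leaves every member of $\mathrm{Lat}\,\mathcal{A}$ invariant, and since a normal operator is reflexive, $S$ lies in the WOT-closed algebra generated by $N$ and $1$; equivalently $S=\phi(N)$ for a bounded Borel function $\phi$. Via the spectral theorem I represent $N$ as multiplication by $z$ on a direct-integral $L^2$ space, so that on the reducing subspace $\mathcal{M}$ generated by $x_1,\dots,x_k$ there is a vector $e$ of maximal spectral type with scalar measure $\mu$; then $\mu_{x_i}\ll\mu$ for all $i$, and the hypothesis applied to $e$ says precisely that $\phi$ lies in the $L^2(\mu)$-closure of the monomials $\{\lambda z^n:\lambda\in\mathbb{C},\,n\geq0\}$.

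The heart of the argument is the analysis of that $L^2(\mu)$-closure, organized by the peak circle. Let $r=\|N|_{\mathcal{M}}\|$ be the maximal modulus on $\mathrm{supp}\,\mu$ and $K=\{|z|=r\}$. Choose $(\lambda_j,n_j)$ with $\lambda_jz^{n_j}\to\phi$ in $L^2(\mu)$. If $(n_j)$ has a bounded subsequence, then $\phi$ is a single monomial $\lambda z^{n_0}$ and $S=\lambda N^{n_0}\in\mathbb{C}\text{-}Orb(N)$, so I may assume $n_j\to\infty$. If $\mu(K)=0$, a direct estimate using $\int|z|^{2n_j}\,d\mu\geq(\rho')^{2n_j}\mu(\{|z|>\rho'\})$ for each $\rho'<r$ forces $\lambda_jz^{n_j}\to0$ uniformly on every smaller disk, whence $\phi=0$ and $S=0$. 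If $\mu(K)>0$, the estimate $\int|z|^{2n_j}\,d\mu\geq r^{2n_j}\mu(K)$ shows $|\lambda_j|r^{n_j}$ stays bounded; writing $z=re^{i\theta}$ on $K$ gives $\lambda_jz^{n_j}=c_je^{in_j\theta}$ with $c_j=\lambda_jr^{n_j}$ bounded and $\lambda_jz^{n_j}\to0$ pointwise off $K$. Passing to a subsequence with $\mu$-a.e.\ convergence, I obtain $\lambda_jz^{n_j}\to\phi$ pointwise $\mu$-a.e.\ with $\sup_j|\lambda_jz^{n_j}|\leq C<\infty$ everywhere.

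Finally I would transfer this to strong convergence by dominated convergence. For every $x\in\mathcal{M}$ we have $\mu_x\ll\mu$, so $\lambda_jz^{n_j}\to\phi$ pointwise $\mu_x$-a.e., and $\|\lambda_jN^{n_j}x-Sx\|^2=\int|\lambda_jz^{n_j}-\phi|^2\,d\mu_x\to0$ since the integrand is dominated by the constant $(C+\|\phi\|_\infty)^2$. In particular $\lambda_jN^{n_j}x_i\to Sx_i$ for each $i$, giving the required simultaneous approximation. I expect the main obstacle to be the peak-circle step: controlling $\lambda_jz^{n_j}$ on $K$ and extracting pointwise-a.e.\ (not merely $L^2$) convergence with a uniform bound, together with the structural input $S=\phi(N)$, so that a \emph{single} function $\phi$ governs all the vectors at once; without this coherence the separate approximations of the $Sx_i$ could not be merged into one sequence.
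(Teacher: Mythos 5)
Your argument is correct, but it takes a genuinely different route from the paper: the paper's entire proof is a citation of \cite[Proposition 3]{HNRR}, which says (in essence) that any subset $\mathcal{S}$ of a commutative von Neumann algebra is reflexive in this orbit sense --- if $Bx\in(\mathcal{S}x)^{-}$ for every $x$, then $B$ lies in the SOT-closure of $\mathcal{S}$ --- and since $\mathbb{C}$-$\mathrm{Orb}(N)$ sits inside the abelian von Neumann algebra $W^{*}(N)$, the proposition is immediate. What you have done is reconstruct the analysis that this citation hides: Sarason's theorem \cite{Sa} to obtain the structural fact $S=\phi(N)$, a maximal-spectral-type vector $e$ in the reducing subspace $\mathcal{M}$ generated by $x_{1},\dots,x_{k}$ so that a single scalar measure $\mu$ controls all of them, the trichotomy (bounded exponents, $\mu(K)=0$, $\mu(K)>0$) governing the behavior of $\lambda_{j}z^{n_{j}}$ at the peak circle $K$, and dominated convergence (via $\mu_{x_{i}}\ll\mu$ together with the uniform bound $|\lambda_{j}z^{n_{j}}|\leq C$ $\mu$-a.e.) to upgrade $L^{2}(\mu)$-approximation at the single vector $e$ to simultaneous approximation at $x_{1},\dots,x_{k}$. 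The citation buys brevity and generality (any subset of an abelian von Neumann algebra, not just the monomial orbit); your proof buys self-containedness and, as a byproduct, an explicit description of how a nonzero SOT-limit of $\lambda_{j}N^{n_{j}}$ with $n_{j}\to\infty$ can arise (mass on the peak circle). Your dominated-convergence step is a clean substitute for the more common device of choosing $e$ with $\mu_{x_{i}}\leq C\mu_{e}$.

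Two small repairs are needed, both cosmetic. First, Sarason's theorem and the identification of $W^{*}(N)$ with the bounded Borel functions of $N$ are theorems about separable Hilbert spaces, and no separability is assumed here; on a nonseparable space an element of $W^{*}(N)$ need not be a Borel function of $N$. The fix is to pass to $\mathcal{M}$ \emph{before} invoking these results: $\mathcal{M}$ is separable, reduces $N$, and is invariant under $S$ (since $S$ leaves every $N$-invariant subspace invariant), and $S|_{\mathcal{M}}\in\mathbb{C}$-$\mathrm{OrbRef}(N|_{\mathcal{M}})$, so everything you do applies verbatim to $N|_{\mathcal{M}}$. Second, and consequently, your conclusions of the form ``$\phi=0$ and $S=0$'' or ``$S=\lambda N^{n_{0}}$'' are then only statements about $S|_{\mathcal{M}}$, since $\phi$ is determined only $\mu$-a.e.; this is harmless --- the finite-set criterion for SOT-closure asks only for approximation of $Sx_{1},\dots,Sx_{k}$, and different finite sets may use different sequences --- but the claims should be stated that way.
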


\begin{proof}
This is an immediate consequence of \cite[Proposition 3]{HNRR}.
\end{proof}

\bigskip

The next two results are consequences of Theorem \ref{locnil}.

\begin{theorem}
\label{unionker}Suppose $T$ is a bounded linear operator on a normed space $X$
over the field $\mathbb{F}\in\left\{  \mathbb{R}\emph{,}\text{ }%
\mathbb{C}\right\}  $ such that $\cup_{n=1}^{\infty}\ker\left(  T^{n}\right)
$ is dense in $X$. Then $T$ is $\mathbb{F}$-orbit reflexive and $\mathbb{F}%
$-Orb$\left(  T\right)  $ is SOT-closed. Moreover, if $S\in\mathbb{F}%
$-OrbRef$\left(  T\right)  $, $x\in X,$ $\beta\in\mathbb{F}$, $k\geq0,$ and
$Sx=\beta T^{k}x\neq0$, then $S=\beta T^{k}$.
\end{theorem}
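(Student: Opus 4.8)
The plan is to push everything onto the dense invariant subspace $D=\bigcup_{n\ge1}\ker(T^{n})$, where $T$ restricts to a locally nilpotent transformation, settle the algebraic question there with Theorem \ref{locnil}, and then lift back to $X$ by continuity. Note first that $D$ is a linear subspace (the kernels are nested), that $T(D)\subseteq D$, that $T|_{D}$ is locally nilpotent, and that $D$ is dense by hypothesis.

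Suppose $S\in\mathbb{F}$-OrbRef$(T)$. The key elementary point is that for $y\in D$, say $T^{N}y=0$, the set $\mathbb{F}$-Orb$(T,y)=\{\lambda T^{n}y:0\le n<N,\ \lambda\in\mathbb{F}\}$ is a \emph{finite} union of one-dimensional subspaces of $X$, hence norm-closed. Therefore the requirement $Sy\in\mathbb{F}$-Orb$(T,y)^{-}$ collapses to $Sy\in\mathbb{F}$-Orb$(T,y)$, and in particular $Sy\in D$. Thus $S(D)\subseteq D$ and $S|_{D}\in\mathbb{F}$-OrbRef$_{0}(T|_{D})$. Since $T|_{D}$ is locally nilpotent, Theorem \ref{locnil} gives a scalar $\gamma$ and an integer $m\ge0$ with $S|_{D}=\gamma\,(T|_{D})^{m}$; because $S$ and $T^{m}$ are bounded and $D$ is dense, this extends to $S=\gamma T^{m}$ on all of $X$, so $S\in\mathbb{F}$-Orb$(T)$.

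This already yields the first two assertions. Indeed $\mathbb{F}$-Orb$(T)\subseteq\mathbb{F}$-OrbRef$(T)$ trivially, and $\mathbb{F}$-OrbRef$(T)$ is SOT-closed because $A\mapsto Ax$ is SOT-continuous into $X$ while each $\mathbb{F}$-Orb$(T,x)^{-}$ is norm-closed; combining these with the inclusion just proved gives
\[
\mathbb{F}\text{-Orb}(T)\ \subseteq\ \overline{\mathbb{F}\text{-Orb}(T)}^{\,\mathrm{SOT}}\ \subseteq\ \mathbb{F}\text{-OrbRef}(T)\ \subseteq\ \mathbb{F}\text{-Orb}(T),
\]
so all three sets coincide. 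Hence $\mathbb{F}$-Orb$(T)$ is SOT-closed and $T$ is $\mathbb{F}$-orbit reflexive.

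For the final (``moreover'') assertion, suppose in addition $Sx=\beta T^{k}x\neq0$; then $S\neq0$, so $\gamma\neq0$, and $T^{k}\neq0$, so (by density) there are vectors $f\in D$ with $T^{k}f\neq0$. Evaluating the identity $S=\gamma T^{m}$ at $x$ gives $\gamma T^{m}x=\beta T^{k}x\neq0$, and the goal is to conclude $\gamma=\beta$ and $m=k$, whence $S=\gamma T^{m}=\beta T^{k}$. I would try to do this by transporting the relation to $D$, where the linear independence of the Jordan chains of the locally nilpotent $T|_{D}$ makes the scalar and the exponent unique, and then invoking the ``moreover'' clause of Theorem \ref{locnil} directly on $D$. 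The hard part will be precisely this matching step: the hypothesis pins $S$ down on the dense piece $D$ as $\gamma T^{m}$, but the data $(\beta,k)$ enter only through the single vector $x$, which need not lie in $D$, so one must rule out spurious coincidences $\gamma T^{m}x=\beta T^{k}x$ with $(\gamma,m)\neq(\beta,k)$. Controlling this is exactly where the local nilpotence of $T|_{D}$, rather than the mere boundedness of $T$ on $X$, has to be used.
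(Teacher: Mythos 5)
Your proof of the first two assertions is correct and is essentially the paper's own argument: restrict to the dense invariant subspace $M=\bigcup_{n\ge1}\ker(T^{n})$, observe that for $y\in M$ the set $\mathbb{F}$-Orb$(T,y)$ is a finite union of subspaces of dimension at most one and hence norm-closed, conclude $S|_{M}\in\mathbb{F}$-OrbRef$_{0}(T|_{M})$, apply Theorem \ref{locnil}, and extend by continuity from the dense subspace; your SOT sandwich for the closedness assertion is also correct (the paper leaves that step implicit).

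The ``moreover'' clause is where you stop, and the obstruction you flag is not merely hard, it is fatal: the clause as stated, with $x\in X$ arbitrary, is false, so no argument can close your gap. Take $X=\ell^{2}$, let $T$ be the backward shift, $Te_{0}=0$ and $Te_{n}=e_{n-1}$ for $n\ge1$, so that $\bigcup_{n\ge1}\ker(T^{n})$ is the dense subspace of finitely supported vectors. Put $S=T\in\mathbb{F}$-OrbRef$(T)$ and $x=\sum_{n\ge0}2^{-n}e_{n}$. Then $Tx=\frac{1}{2}x$, so $Sx=\frac{1}{2}T^{0}x\neq0$, i.e.\ the hypotheses hold with $\beta=\frac{1}{2}$ and $k=0$, but $S=T$ is not $\frac{1}{2}$ times the identity. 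The point is that density of $\bigcup_{n\ge1}\ker(T^{n})$ forces local nilpotence only on that subspace; off it, $T$ can have eigenvectors with nonzero eigenvalue, and these produce exactly the spurious coincidences $\gamma T^{m}x=\beta T^{k}x$ with $(\gamma,m)\neq(\beta,k)$ that you were trying to exclude. The clause becomes true if one requires $x\in\bigcup_{n\ge1}\ker(T^{n})$: then, with $N$ minimal such that $T^{N}x=0$, the set $\{x,Tx,\ldots,T^{N-1}x\}$ is linearly independent, so $(\beta,k)$ is pinned down and your transport-to-$D$ plan (equivalently, the ``moreover'' clause of Theorem \ref{locnil} applied to $T|_{M}$) finishes at once. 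For what it is worth, the paper's own proof never addresses the ``moreover'' clause either---it ends as soon as $S=\lambda T^{n}$ is established---so the unrestricted $x\in X$ is an error in the statement rather than an idea you failed to find.
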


\begin{proof}
Suppose $S\in\mathbb{F}$-OrbRef$\left(  T\right)  $ and let $M=\cup
_{n=1}^{\infty}\ker\left(  T^{n}\right)  $. It is clear that $S\left(
M\right)  \subseteq M$ and $T\left(  M\right)  \subseteq M$ and $S|_{M}%
\in\mathbb{F}$-OrbRef$\left(  T|M\right)  $. But $T|M$ is locally nilpotent,
and if $x\in M$ and $T^{n}x=0$, then
\[
\mathbb{F}\text{-}Orb\left(  T\right)  =\cup_{k=0}^{n}\mathbb{F}T^{k}x
\]
is norm closed. Hence, $\mathbb{F}$-OrbRef$\left(  T|M\right)  =\mathbb{F}%
$-OrbRef$_{0}\left(  T|M\right)  ,$ which, by Theorem \ref{locnil} is
$\mathbb{F}$-Orb$\left(  T\right)  $. Hence there is a $\lambda\in\mathbb{F}$
and an $n\geq0$ such that $S|M=\lambda T^{n}|M$. However, $M$ is dense in $X,$
so $S=\lambda T^{n}\in\mathbb{F}$-Orb$\left(  T\right)  $.
\end{proof}

\bigskip

The preceding theorem implies a stronger version of itself.

\begin{corollary}
Suppose $X$ is a normed space over $\mathbb{F}\in\left\{  \mathbb{R}%
,\mathbb{C}\right\}  $, $T\in B\left(  X\right)  ,$ and there is a
decreasingly directed family $\left\{  X_{\lambda}:\lambda\in\Lambda\right\}
$ of $T$-invariant closed linear subspaces such that
\end{corollary}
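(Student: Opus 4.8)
The plan is to show that every $S\in\mathbb{F}$-OrbRef$\left(  T\right)  $ has the form $cT^{m}$ for a single scalar $c\in\mathbb{F}$ and a single integer $m\geq0$; this forces $\mathbb{F}$-OrbRef$\left(  T\right)  =\mathbb{F}$-Orb$\left(  T\right)  $ and hence yields both orbit reflexivity and SOT-closedness of $\mathbb{F}$-Orb$\left(  T\right)  $ at once. I read the truncated hypothesis as asserting that $\bigcap_{\lambda}X_{\lambda}=\left\{  0\right\}  $ and that for each $\lambda$ the operator $\hat{T}_{\lambda}$ induced by $T$ on the quotient normed space $X/X_{\lambda}$ satisfies that $\bigcup_{n\geq1}\ker\bigl(  \hat{T}_{\lambda}^{n}\bigr)  $ is dense in $X/X_{\lambda}$; then Theorem \ref{unionker} is exactly the special case of the one-element family $\left\{  \left\{  0\right\}  \right\}  $, which is why the corollary is a genuine strengthening.

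First I would check that each $X_{\lambda}$ is $S$-invariant: since $X_{\lambda}$ is closed and $T$-invariant, for $x\in X_{\lambda}$ the set $\mathbb{F}$-Orb$\left(  T,x\right)  $ lies in $X_{\lambda}$, so its closure does too, whence $Sx\in\mathbb{F}$-Orb$\left(  T,x\right)  ^{-}\subseteq X_{\lambda}$. Writing $\pi_{\lambda}:X\rightarrow X/X_{\lambda}$ for the quotient map, $S$ and $T$ then induce bounded operators $\hat{S}_{\lambda}$ and $\hat{T}_{\lambda}$ on $X/X_{\lambda}$, and continuity of $\pi_{\lambda}$ gives $\hat{S}_{\lambda}\in\mathbb{F}$-OrbRef$\left(  \hat{T}_{\lambda}\right)  $. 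By the density hypothesis and Theorem \ref{unionker} applied to $\hat{T}_{\lambda}$, there are $c_{\lambda}\in\mathbb{F}$ and $m_{\lambda}\geq0$ with $\hat{S}_{\lambda}=c_{\lambda}\hat{T}_{\lambda}^{m_{\lambda}}$.

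The crux, and the step I expect to be the real obstacle, is making $c_{\lambda}$ and $m_{\lambda}$ independent of $\lambda$. If $S=0$ we are done, so fix $x_{0}$ with $Sx_{0}\neq0$; since $\bigcap_{\lambda}X_{\lambda}=\left\{  0\right\}  $ there is $\lambda_{0}$ with $Sx_{0}\notin X_{\lambda_{0}}$, and then $\hat{S}_{\lambda}\neq0$ for every $\lambda$ with $X_{\lambda}\subseteq X_{\lambda_{0}}$. Whenever $X_{\lambda^{\prime}}\subseteq X_{\lambda}$, the natural surjection $q:X/X_{\lambda^{\prime}}\rightarrow X/X_{\lambda}$ intertwines the induced operators, so from $\hat{S}_{\lambda^{\prime}}=c_{\lambda^{\prime}}\hat{T}_{\lambda^{\prime}}^{m_{\lambda^{\prime}}}$ and surjectivity of $q$ I obtain $c_{\lambda^{\prime}}\hat{T}_{\lambda}^{m_{\lambda^{\prime}}}=\hat{S}_{\lambda}=c_{\lambda}\hat{T}_{\lambda}^{m_{\lambda}}$ on $X/X_{\lambda}$. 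Because $\hat{S}_{\lambda}\neq0$ and $\hat{T}_{\lambda}$ is locally nilpotent on the dense subspace $\bigcup_{n}\ker\bigl(  \hat{T}_{\lambda}^{n}\bigr)  $, the representation of a nonzero operator in the form $c\hat{T}_{\lambda}^{m}$ is unique: choosing $z$ in that dense subspace with $\hat{S}_{\lambda}z\neq0$, the set $\left\{  z,\hat{T}_{\lambda}z,\ldots\right\}  $ is linearly independent up to the nilpotency index, which is exactly the uniqueness recorded in the "Moreover" clause of Theorem \ref{unionker}. Hence $c_{\lambda^{\prime}}=c_{\lambda}$ and $m_{\lambda^{\prime}}=m_{\lambda}$.

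Finally I would globalize using directedness: given any two indices below $\lambda_{0}$, a common lower bound together with the previous paragraph forces their scalars and exponents to coincide, so there are fixed $c$ and $m$ with $\hat{S}_{\lambda}=c\hat{T}_{\lambda}^{m}$, equivalently $\left(  S-cT^{m}\right)  \left(  X\right)  \subseteq X_{\lambda}$, for every $\lambda$ with $X_{\lambda}\subseteq X_{\lambda_{0}}$. These indices form a cofinal subfamily, whose intersection is still $\bigcap_{\lambda}X_{\lambda}=\left\{  0\right\}  $, so $S=cT^{m}\in\mathbb{F}$-Orb$\left(  T\right)  $. Everything outside the uniqueness-and-consistency argument of the third paragraph is routine bookkeeping with quotients and the cofinality of $\left\{  \lambda:X_{\lambda}\subseteq X_{\lambda_{0}}\right\}  $.
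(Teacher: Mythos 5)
Your proposal is correct and follows essentially the same route as the paper's proof: pass to the quotients $X/X_{\lambda}$, apply Theorem \ref{unionker} to the induced operators $\hat{T}_{\lambda}$ and $\hat{S}_{\lambda}$, and use directedness together with uniqueness of the scalar and exponent to patch the local representations $\hat{S}_{\lambda}=c_{\lambda}\hat{T}_{\lambda}^{m_{\lambda}}$ into a single identity $S=cT^{m}$ via $\bigcap_{\lambda}X_{\lambda}=\left\{  0\right\}  $. The only cosmetic difference is bookkeeping: the paper fixes a vector $e$ with $Se\neq0$ and discards those $X_{\lambda}$ containing $e$ or $Se$, matching coefficients through $Se-\alpha T^{m}e\in X_{\sigma}\subseteq X_{\lambda}$, whereas you work at the operator level with the intertwining surjections $X/X_{\lambda^{\prime}}\rightarrow X/X_{\lambda}$ and a cofinal subfamily below $\lambda_{0}$.
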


\begin{enumerate}
\item for every $\lambda\in\Lambda$, $\cup_{n=0}^{\infty}\left(  T^{n}\right)
^{-1}\left(  X_{\lambda}\right)  $ is dense in $X$, and

\item $\cap_{\lambda\in\Lambda}X_{\lambda}=\left\{  0\right\}  .$
\end{enumerate}

Then $T$ is $\mathbb{F}$-orbit reflexive.

\begin{proof}
Suppose $S\in\mathbb{F}$-OrbRef$\left(  T\right)  $ and $S\neq0$. Choose $e\in
X$ such that $Se\neq0$. It follows from $\left(  2\right)  $ that both
$\left(  1\right)  $ and $\left(  2\right)  $ remain true if we consider only
those $X_{\lambda}$ that contain neither $e$ nor $Se$. Since $T\left(
X_{\lambda}\right)  \subseteq X_{\lambda}$, $\hat{T}_{\lambda}\left(
x+X_{\lambda}\right)  =Tx+X_{\lambda}$ defines a bounded linear operator
$\hat{T}_{\lambda}$ on $X/X_{\lambda}$. Condition $\left(  1\right)  $ implies
that $\cup_{n=1}^{\infty}\ker\left(  \hat{T}_{\lambda}^{n}\right)  $ is dense
in $X/X_{\lambda};$ whence $\hat{T}_{\lambda}$ is $\mathbb{F}$-orbit
reflexive. However, $S\in\mathbb{F}$-OrbRef$\left(  T\right)  $ implies that
$S\left(  X_{\lambda}\right)  \subseteq X_{\lambda},$ so $\hat{S}_{\lambda
}\left(  x+X_{\lambda}\right)  =Sx+X_{\lambda}$ defines an operator on
$X/X_{\lambda}$ such that $\hat{S}_{\lambda}\in\mathbb{F}$-OrbRef$\left(
\hat{T}_{\lambda}\right)  .$ Hence, by Theorem \ref{unionker}, since $\hat
{S}_{\lambda}\left(  e+X_{\lambda}\right)  \neq0$, there is a unique $\beta
\in\mathbb{F}$ and a unique nonnegative integer $n$ such that $\hat
{S}_{\lambda}\left(  e+X_{\lambda}\right)  =\beta\hat{T}_{\lambda}^{n}\left(
e+X_{\lambda}\right)  ,$ and for this $\beta$ and $n,$ we have $\hat
{S}_{\lambda}=\beta\hat{T}_{\lambda}^{n}$. Suppose $\eta\in\Lambda$. Since the
$X_{\lambda}$'s are decreasingly directed, there is a $\sigma\in\Lambda$ such
that $X_{\sigma}\subseteq X_{\lambda}\cap X_{\eta}$. Applying the same
arguments we used on $X_{\lambda}$, there is a unique $\alpha\in\mathbb{F}$
and a unique integer $m\geq0$ such that $\hat{S}_{\sigma}\left(  e+X_{\sigma
}\right)  =\alpha T_{\sigma}^{m}\left(  e+X_{\sigma}\right)  .$ However, it
follows that
\[
Se-\alpha T^{n}e\in X_{\sigma}\subseteq X_{\lambda,}%
\]
which implies $\hat{S}_{\lambda}\left(  e+X_{\lambda}\right)  =\alpha\hat
{T}_{\lambda}\left(  e+X_{\lambda}\right)  ,$ which implies that $\alpha
=\beta$ and $m=n.$ Thus $\hat{S}_{\sigma}=\beta\hat{T}_{\sigma}^{n}$, which in
turn implies $\hat{S}_{\eta}=\beta\hat{T}_{\eta}^{n}$. Therefore $\hat
{S}_{\eta}=\beta\hat{T}_{\eta}^{n}$ for every $\eta\in\Lambda$. Therefore, for
every $\eta\in\Lambda$ and for every $x\in X,$%
\[
Sx-\beta T^{n}x\in X_{\eta},
\]
which, by $\left(  2\right)  $, implies $S=\beta T^{n}$.
\end{proof}

\bigskip

The following corollary applies to operators that have a strictly
upper-triangular operator matrix with respect to some direct sum decomposition.

\begin{corollary}
If a normed space $X$ over $\mathbb{F}\in\left\{  \mathbb{R},\mathbb{C}%
\right\}  $ is a direct sum of spaces $\left\{  X_{n}:n\in\mathbb{N}\right\}
$ such that $T\left(  X_{1}\right)  =\left\{  0\right\}  ,$ and for every
$n>1$,
\[
T\left(  X_{n}\right)  \subseteq\left(
{\displaystyle\sum\nolimits_{k<n}^{\oplus}}
X_{k}\right)  ^{-},
\]
then $T$ is $\mathbb{F}$-orbit reflexive and $\mathbb{F}$-Orb$\left(
T\right)  $ is SOT-closed.
\end{corollary}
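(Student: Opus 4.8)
The plan is to deduce this directly from Theorem \ref{unionker} rather than from the preceding (decreasingly-directed-family) corollary; going through Theorem \ref{unionker} is in fact what yields the extra conclusion that $\mathbb{F}$-Orb$\left(T\right)$ is SOT-closed, which the other corollary does not provide. So the whole task reduces to verifying the single hypothesis of Theorem \ref{unionker}, namely that $M=\bigcup_{n=1}^{\infty}\ker\left(T^{n}\right)$ is dense in $X$. Here I read ``$X$ is a direct sum of the $X_{n}$'' as saying that the algebraic direct sum $D=\sum_{n}X_{n}$ of finite sums is dense in $X$, so it suffices to show $D\subseteq M$.

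The heart of the matter is the following claim, which I would prove by induction on $N$: writing $Z_{N}=\left(\sum_{k\leq N}X_{k}\right)^{-}$ for the closed span of the first $N$ summands (and $Z_{0}=\left\{0\right\}$), one has $T^{N}\left(Z_{N}\right)=\left\{0\right\}$, i.e. $Z_{N}\subseteq\ker\left(T^{N}\right)$. The case $N=0$ is trivial, and $N=1$ follows from the hypothesis $T\left(X_{1}\right)=\left\{0\right\}$ together with continuity of $T$, which forces $T\left(Z_{1}\right)=T\left(\overline{X_{1}}\right)\subseteq\overline{T\left(X_{1}\right)}=\left\{0\right\}$. For the inductive step I would first use the strictly-upper-triangular hypothesis, together with $Z_{k-1}\subseteq Z_{N-1}$ for $k\leq N$, to get $T\left(\sum_{k\leq N}X_{k}\right)\subseteq\sum_{k\leq N}Z_{k-1}\subseteq Z_{N-1}$, and then push this through the closure using continuity of $T$: since $Z_{N-1}$ is already closed, $T\left(Z_{N}\right)=T\left(\overline{\sum_{k\leq N}X_{k}}\right)\subseteq\overline{Z_{N-1}}=Z_{N-1}$. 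Applying the inductive hypothesis $T^{N-1}\left(Z_{N-1}\right)=\left\{0\right\}$ then gives $T^{N}\left(Z_{N}\right)=T^{N-1}\left(T\left(Z_{N}\right)\right)\subseteq T^{N-1}\left(Z_{N-1}\right)=\left\{0\right\}$, completing the induction.

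With the claim in hand the proof finishes quickly: for each $n$ we have $X_{n}\subseteq Z_{n}\subseteq\ker\left(T^{n}\right)\subseteq M$, hence $D=\sum_{n}X_{n}\subseteq M$. Since $D$ is dense, so is $M$, and Theorem \ref{unionker} applies to give both that $T$ is $\mathbb{F}$-orbit reflexive and that $\mathbb{F}$-Orb$\left(T\right)$ is SOT-closed.

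The only real obstacle is the bookkeeping with the closures: the summands interact only after passing to the closure, so at each stage of the induction I must carefully invoke the boundedness (continuity) of $T$ to justify $T\left(\overline{S}\right)\subseteq\overline{T\left(S\right)}$ and to use that the target $Z_{N-1}$ is itself closed. Everything else is a routine induction, and no appeal to the preceding corollary is needed.
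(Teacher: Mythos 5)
Your proof is correct and is essentially the route the paper intends: the paper states this corollary without proof as a direct application of Theorem \ref{unionker}, and your induction showing $\left(\sum_{k\leq N}X_{k}\right)^{-}\subseteq\ker\left(T^{N}\right)$ (using continuity of $T$ and closedness of the target space at each step) is precisely the verification of the density hypothesis that the paper leaves implicit. Your observation that one must route through Theorem \ref{unionker} rather than through the preceding decreasingly-directed-family corollary, because only the former yields the conclusion that $\mathbb{F}$-Orb$\left(T\right)$ is SOT-closed, is also accurate.
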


\bigskip

The preceding corollary has some familiar special cases.\bigskip

\begin{corollary}
If $T$ is an operator-weighted shift or if $T$ is a direct sum of nilpotent
operators on a normed space $X$ over $\mathbb{F}\in\left\{  \mathbb{R}%
,\mathbb{C}\right\}  $, then $T$ is $\mathbb{F}$-orbit reflexive.
\end{corollary}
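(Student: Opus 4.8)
The plan is to recognize each of the two families of operators as an instance of one of the results already proved in this section, so that the corollary follows with essentially no new work; the only real content is choosing the right direct-sum decomposition (or invariant filtration) in each case and then quoting the appropriate earlier statement.

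First I would dispose of a direct sum of nilpotent operators. Write $T=\bigoplus_{\alpha}N_{\alpha}$ acting on $X=\overline{\bigoplus_{\alpha}X_{\alpha}}$, where each $N_{\alpha}$ is nilpotent on $X_{\alpha}$, say $N_{\alpha}^{m_{\alpha}}=0$. Even though $T$ itself need not be nilpotent (the exponents $m_{\alpha}$ may be unbounded, so $T$ is only locally nilpotent), every finitely supported vector $x=(x_{\alpha})$ lies in $\ker(T^{n})$ for $n=\max\{m_{\alpha}:x_{\alpha}\neq0\}$. Hence $\bigcup_{n\geq1}\ker(T^{n})$ contains the algebraic direct sum of the $X_{\alpha}$ and is therefore dense in $X$, and Theorem \ref{unionker} applies verbatim to give that $T$ is $\mathbb{F}$-orbit reflexive with $\mathbb{F}$-Orb$(T)$ SOT-closed. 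If one instead wishes to route this through the immediately preceding (strictly upper-triangular) corollary, one can refine the decomposition: choosing closed complements $X_{\alpha,j}$ with $\ker N_{\alpha}^{j}=\ker N_{\alpha}^{j-1}\oplus X_{\alpha,j}$ (available, e.g., when the summands are finite-dimensional) and setting $X_{n}=\overline{\bigoplus_{\alpha}X_{\alpha,n}}$, one gets $N_{\alpha}(X_{\alpha,n})\subseteq\ker N_{\alpha}^{n-1}=\bigoplus_{k<n}X_{\alpha,k}$, whence $T(X_{1})=\{0\}$ and $T(X_{n})\subseteq(\sum_{k<n}X_{k})^{-}$. I would lead with the Theorem \ref{unionker} route since it needs no complementation hypothesis.

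Next I would treat the operator-weighted shift. For a \emph{backward} shift, with the natural decomposition $X=\bigoplus_{n\geq1}X_{n}$ and weight maps $W_{n}\colon X_{n}\to X_{n-1}$, one has $T(X_{1})=\{0\}$ and $T(X_{n})=W_{n}(X_{n})\subseteq X_{n-1}\subseteq(\sum_{k<n}X_{k})^{-}$; this is exactly the hypothesis of the preceding corollary, so reflexivity follows at once. The point to watch is orientation: a \emph{forward} (or bilateral) shift is strictly \emph{lower}-triangular and is not literally covered by that corollary. For those I would instead invoke the decreasingly-directed-family corollary with $X_{m}=\overline{\bigoplus_{n\geq m}X_{n}}$. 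These subspaces are $T$-invariant and decreasingly directed with $\bigcap_{m}X_{m}=\{0\}$, and since a finitely supported vector concentrated in indices $\leq K$ is carried by $T^{n}$ into $X_{m}$ as soon as $n\geq m+K$, the density requirement $\overline{\bigcup_{n}(T^{n})^{-1}(X_{m})}=X$ holds; the corollary then yields $\mathbb{F}$-orbit reflexivity.

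The main obstacle is thus organizational rather than analytic: fixing the grading or filtration so that the hypotheses of an earlier result hold on the nose. Concretely, in the nilpotent case the thing to verify is density of $\bigcup_{n}\ker(T^{n})$ in the genuinely infinite (non-nilpotent) situation, and in the shift case the thing to get right is matching the direction of the shift to the correct earlier corollary — the strictly upper-triangular corollary for backward shifts, and the decreasing-family version for forward and bilateral shifts. Once the decomposition is chosen, no estimates beyond those already packaged in Theorem \ref{unionker} and its corollaries are required.
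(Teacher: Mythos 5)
Your proposal is correct, and it is in fact more careful than the paper itself, which offers essentially no proof: the statement appears immediately after the strictly upper-triangular corollary with only the remark that it records ``some familiar special cases'' of that result. For a direct sum of nilpotents your lead argument (the algebraic direct sum lies in $\bigcup_{n}\ker(T^{n})$, so Theorem \ref{unionker} applies) is the cleanest route and avoids any complementation issues; the paper's implied route through the upper-triangular corollary also works but needs the grading by kernels that you sketch. Your point about orientation is a genuine one and identifies a gap in the paper's implicit derivation: any operator satisfying the hypotheses of the upper-triangular corollary kills $X_{1}$, so an injective forward (or bilateral) operator-weighted shift is not literally a special case of that corollary, and your rerouting of those cases through the decreasingly-directed-family corollary, using the tail spaces $X_{m}=\overline{\bigoplus_{n\geq m}X_{n}}$ with the density check $T^{n}x\in X_{m}$ for finitely supported $x$ and large $n$, is exactly the right fix. (The unilateral forward case could alternatively be handled by the theorem hypothesizing $\bigcap_{n}\overline{T^{n}(X)}=\{0\}$, but that result appears only after this corollary in the paper, so your choice respects the logical order.) In short, you use the same toolkit but distribute the cases among Theorem \ref{unionker}, the upper-triangular corollary, and the decreasing-family corollary, and what this buys is a proof that actually covers shifts of either orientation rather than only backward ones.
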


\bigskip

\begin{theorem}
Suppose $X$ is a normed space over $\mathbb{F}\in\left\{  \mathbb{R}%
,\mathbb{C}\right\}  $, $T\in B\left(  X\right)  $ and $\cap_{n=1}^{\infty
}T^{n}\left(  X\right)  ^{-}=\left\{  0\right\}  $. Then $T$ is $\mathbb{F}%
$-orbit reflexive and $\mathbb{F}$-OrbRef$\left(  T\right)  =\mathbb{F}%
$-Orb$\left(  T\right)  $. Moreover, if $S\in\mathbb{F}$-OrbRef$\left(
T\right)  ,$ $f\in X,$ and $0\neq Sf=\beta T^{k}f$, then $S=\beta T^{k}$.
\end{theorem}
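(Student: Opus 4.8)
The plan is to recognize that the hypothesis manufactures exactly the decreasingly directed family of invariant subspaces treated by the earlier corollary (the one on a family $\left\{X_\lambda\right\}$ satisfying conditions (1) and (2)). For each integer $n\geq0$ I set $X_n=\left(T^nX\right)^-$, the norm closure of the range of $T^n$. These are closed linear subspaces, and since $T^{n+1}X=T^n\left(TX\right)\subseteq T^nX$ they decrease, so $\left\{X_n\right\}$ is decreasingly directed; continuity of $T$ gives $T\left(X_n\right)\subseteq\left(T^{n+1}X\right)^-=X_{n+1}\subseteq X_n$, so each $X_n$ is $T$-invariant. Condition (2) of that corollary, $\cap_n X_n=\left\{0\right\}$, is precisely our assumption, and condition (1) is automatic: because $T^nX\subseteq X_n$ we have $\left(T^n\right)^{-1}\left(X_n\right)=X$, so for every $n$ the set $\cup_{m\geq0}\left(T^m\right)^{-1}\left(X_n\right)$ equals $X$ and is in particular dense. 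Hence the corollary applies and $T$ is $\mathbb{F}$-orbit reflexive.

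To upgrade this to the equality $\mathbb{F}$-OrbRef$\left(T\right)=\mathbb{F}$-Orb$\left(T\right)$, I would read the conclusion off the \emph{proof} of that corollary rather than its statement: that argument shows every nonzero $S\in\mathbb{F}$-OrbRef$\left(T\right)$ has the form $S=\beta'T^{n'}$ for some $\beta'\in\mathbb{F}$ and $n'\geq0$. Since $0=0\,T^0\in\mathbb{F}$-Orb$\left(T\right)$, this gives $\mathbb{F}$-OrbRef$\left(T\right)\subseteq\mathbb{F}$-Orb$\left(T\right)$, and the reverse inclusion is trivial; in particular $\mathbb{F}$-Orb$\left(T\right)$ is SOT-closed.

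For the ``moreover'' statement, suppose $S\in\mathbb{F}$-OrbRef$\left(T\right)$ and $0\neq Sf=\beta T^kf$; here $\beta\neq0$ and $S\neq0$, so by the previous paragraph $S=\beta'T^{n'}$ with $\beta'\neq0$, and the task is to identify $\left(\beta',n'\right)=\left(\beta,k\right)$. The device is to pass to a quotient on which $T$ is nilpotent: using $\cap_n X_n=\left\{0\right\}$ together with $\beta T^kf\neq0$, choose $N$ with $\beta T^kf\notin X_N$, and form $\hat T_N\left(x+X_N\right)=Tx+X_N$ on $X/X_N$. Because $T^Nx\in X_N$ we have $\hat T_N^N=0$, so $\hat T_N$ is nilpotent, and writing $v=f+X_N$ the two descriptions of $Sf$ give
\[
\beta\,\hat T_N^k v=Sf+X_N=\beta'\,\hat T_N^{n'}v\neq0 .
\]

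Finally I would invoke the linear independence of a Jordan chain, the same fact underlying the proof of Theorem \ref{locnil}: for the nilpotent $\hat T_N$, if $r$ is largest with $\hat T_N^r v\neq0$ then $v,\hat T_N v,\dots,\hat T_N^r v$ are linearly independent. Both $\hat T_N^k v$ and $\hat T_N^{n'}v$ are nonzero, hence $k,n'\leq r$, and the displayed identity $\beta\,\hat T_N^k v-\beta'\,\hat T_N^{n'}v=0$ then forces $k=n'$ and $\beta=\beta'$, so $S=\beta'T^{n'}=\beta T^k$. I expect the only real subtlety to be this last step: one must guarantee that $Sf$ survives as a nonzero vector in the quotient (which is exactly what $\cap_n X_n=\left\{0\right\}$ buys us) so that the Jordan-chain independence can pin down the exponent and the scalar exactly. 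The reflexivity itself is essentially a direct application of the decreasingly-directed-family corollary.
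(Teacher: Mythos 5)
Your argument is correct, but it is a genuinely different route from the paper's. You obtain the result by specializing the paper's earlier corollary on decreasingly directed families of invariant subspaces to $X_{n}=\left(  T^{n}X\right)  ^{-}$, and your verifications are right: each $X_{n}$ is closed, $T$-invariant, decreasing, condition (1) holds trivially because $\left(  T^{n}\right)  ^{-1}\left(  X_{n}\right)  =X$, and condition (2) is the standing hypothesis; moreover, your key observation that the corollary's \emph{proof} (not its statement) shows every nonzero $S\in\mathbb{F}$-OrbRef$\left(  T\right)  $ equals some $\beta^{\prime}T^{n^{\prime}}$ is exactly what upgrades ``orbit reflexive'' to the equality $\mathbb{F}$-OrbRef$\left(  T\right)  =\mathbb{F}$-Orb$\left(  T\right)  $, and your treatment of the ``moreover'' clause via the nilpotent quotient $X/X_{N}$ and Jordan-chain independence is sound. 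The paper instead argues directly: it first shows the hypothesis forces every finite-dimensional $T$-invariant subspace to be nilpotent for $T$ (a non-nilpotent one would contain a subspace $N$ with $T\left(  N\right)  =N\neq\left\{  0\right\}  $, contradicting $\cap_{n}T^{n}\left(  X\right)  ^{-}=\left\{  0\right\}  $), so $T$ is either locally nilpotent or not locally algebraic, and algebraic $\mathbb{F}$-orbit reflexivity follows from Theorem \ref{locnil} or Theorem \ref{notlocalg}; it then notes each orbit $\mathbb{F}$-Orb$\left(  T,x\right)  $ is norm closed (any limit along powers $k\rightarrow\infty$ lies in every $T^{N}\left(  X\right)  ^{-}$), giving $\mathbb{F}$-OrbRef$\left(  T\right)  =\mathbb{F}$-OrbRef$_{0}\left(  T\right)  =\mathbb{F}$-Orb$\left(  T\right)  $; and it settles the ``moreover'' clause by observing that $0\neq\alpha T^{n}f=\beta T^{k}f$ with $k<n$ would yield a finite-dimensional invariant subspace on which $T$ is not nilpotent. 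What each approach buys: yours is shorter and funnels everything through nilpotent quotients, bypassing the locally-algebraic dichotomy and the algebraic-reflexivity input from \cite{H1} behind Theorem \ref{notlocalg}, but it depends on opening up the corollary's proof, since the corollary as stated only gives that $\mathbb{F}$-OrbRef$\left(  T\right)  $ is the SOT closure of $\mathbb{F}$-Orb$\left(  T\right)  $; the paper's proof is self-contained at that point and records along the way the extra facts that $T$ is algebraically $\mathbb{F}$-orbit reflexive and that the individual orbits $\mathbb{F}$-Orb$\left(  T,x\right)  $ are closed.
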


\begin{proof}
We will first show that $T$ is algebraically $\mathbb{F}$-orbit reflexive. If
$M$ is a finite-dimensional invariant subspace for $T$ and $T|M$ is not
nilpotent, then there is a nonzero $T$-invariant subspace $N$ of $M$ such that
$\ker\left(  T|N\right)  =0.$ Thus $T\left(  N\right)  =N\neq0,$ which
violates $\cap_{n=1}^{\infty}T^{n}\left(  X\right)  ^{-}=\left\{  0\right\}
$. Thus, either $T$ is not locally algebraic or $T$ is locally nilpotent. In
these cases it follows either from Theorem \ref{notlocalg} or Theorem
\ref{locnil} that $T$ is indeed algebraically $\mathbb{F}$-orbit reflexive.
Furthermore, the hypothesis on $T$ implies, for each $x\in X,$ that
\[
\cap_{N=1}^{\infty}\left\{  \lambda T^{k}x:\lambda\in\mathbb{F},k\geq
N\right\}  ^{-SOT}=\left\{  0\right\}  \text{,}%
\]
so $\mathbb{F}$-Orb$\left(  T\right)  $ is closed in $X.$ Thus $\mathbb{F}%
$-OrbRef$\left(  T\right)  =\mathbb{F}$-OrbRef$_{0}\left(  T\right)
=\mathbb{F}$-Orb$\left(  T\right)  $. For the last statement suppose $f\in X,$
$\alpha,\beta\in\mathbb{F},$ and $k,n\geq0$ are integers, and
\[
0\neq Sf=\alpha T^{n}f=\beta T^{k}f.
\]
Clearly if $n=k,$ then $\alpha=\beta.$ Suppose $k<n$. Then $M=sp\left\{
f,Tf,\ldots,T^{n-1}f\right\}  $ is a nonzero finite-dimensional invariant
subspace for $T$ with $\dim M\leq n$. Since $T^{n}f\neq0$, we know $T|M$ is
not locally nilpotent, which, as remarked earlier, contradicts $\cap
_{n=1}^{\infty}T^{n}\left(  X\right)  ^{-}=\left\{  0\right\}  $.
\end{proof}

\bigskip

This theorem also implies a stronger version of itself.\bigskip

\begin{corollary}
Suppose $X$ is a normed space over $\mathbb{F}\in\left\{  \mathbb{R}%
,\mathbb{C}\right\}  $, $T\in B\left(  X\right)  ,$ and there is an
increasingly directed family $\left\{  X_{\lambda}:\lambda\in\Lambda\right\}
$ of $T$-invariant linear subspaces such that
\end{corollary}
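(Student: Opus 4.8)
The plan is to read the two conditions abbreviated by ``such that'' in the natural way dual to the decreasing-family corollary above: (1) for every $\lambda\in\Lambda$ one has $\bigcap_{n=1}^{\infty}T^{n}(X_{\lambda})^{-}=\{0\}$, and (2) $\bigcup_{\lambda\in\Lambda}X_{\lambda}$ is dense in $X$; the asserted conclusion being that $T$ is $\mathbb{F}$-orbit reflexive. Taking $\Lambda$ a singleton with $X_{\lambda}=X$ recovers the preceding theorem, so this is the expected strengthening. Throughout I would write $Y_{\lambda}=X_{\lambda}^{-}$: since $T$ is bounded and $X_{\lambda}$ is $T$-invariant, each $Y_{\lambda}$ is a closed $T$-invariant subspace, the family $\{Y_{\lambda}\}$ is still increasingly directed, $\bigcup_{\lambda}Y_{\lambda}$ is dense, and $T^{n}(Y_{\lambda})^{-}=T^{n}(X_{\lambda})^{-}$ by continuity, so condition (1) transfers to the closures.

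First I would fix $S\in\mathbb{F}$-OrbRef$(T)$ and dispose of the case $S=0$ (which lies in $\mathbb{F}$-Orb$(T)$), so assume $S\neq0$. The setup step is to restrict to the $Y_{\lambda}$: for $x\in X_{\lambda}$ the orbit $\mathbb{F}$-Orb$(T,x)$ lies in $X_{\lambda}$, hence $Sx\in\mathbb{F}$-Orb$(T,x)^{-}\subseteq Y_{\lambda}$, and by continuity $S(Y_{\lambda})\subseteq Y_{\lambda}$. A routine check that closures computed in $Y_{\lambda}$ agree with closures in $X$ then gives $S|_{Y_{\lambda}}\in\mathbb{F}$-OrbRef$(T|_{Y_{\lambda}})$, while $\bigcap_{n}T^{n}(Y_{\lambda})^{-}=\{0\}$ shows $T|_{Y_{\lambda}}$ satisfies the hypothesis of the preceding theorem. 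Thus $T|_{Y_{\lambda}}$ is $\mathbb{F}$-orbit reflexive and, crucially, its ``Moreover'' uniqueness clause is available on every $Y_{\lambda}$.

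The heart of the argument is to produce one pair $(\beta,n)$ valid uniformly. Using continuity of $S$ and density of $\bigcup_{\lambda}X_{\lambda}$, I would pick $e\in X_{\lambda_{0}}$ for some $\lambda_{0}$ with $Se\neq0$. Applying the preceding theorem to $T|_{Y_{\lambda_{0}}}$ furnishes $\beta\in\mathbb{F}$ and $n\geq0$ with $0\neq Se=\beta T^{n}e$. Now for every $\lambda$ with $Y_{\lambda}\supseteq Y_{\lambda_{0}}$ we have $e\in Y_{\lambda}$ and $0\neq Se=\beta(T|_{Y_{\lambda}})^{n}e$, so the ``Moreover'' clause applied on $Y_{\lambda}$ forces $S|_{Y_{\lambda}}=\beta T^{n}|_{Y_{\lambda}}$. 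The point I would emphasize is that I never re-derive the pair on each $Y_{\lambda}$; I transport the fixed $(\beta,n)$ upward through the net, which is exactly what dodges the non-uniqueness of orbit representations at bad vectors. This propagation step is the main obstacle: because $\beta T^{n}e=\beta'T^{n'}e$ can hold with $(\beta,n)\neq(\beta',n')$ (e.g. at an eigenvector), choosing a fresh representation on each $Y_{\lambda}$ need not yield a consistent global operator, and the whole proof hinges on invoking the uniqueness clause with one pre-selected pair.

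Finally I would globalize. Given any $\eta\in\Lambda$, directedness yields $\sigma$ with $Y_{\sigma}\supseteq Y_{\eta}\cup Y_{\lambda_{0}}$; since $Y_{\sigma}\supseteq Y_{\lambda_{0}}$ the previous step gives $S=\beta T^{n}$ on $Y_{\sigma}$, hence on $X_{\eta}$. Therefore $S=\beta T^{n}$ on every $X_{\eta}$, so on the dense set $\bigcup_{\lambda}X_{\lambda}$, and by continuity $S=\beta T^{n}$ on all of $X$. This shows $\mathbb{F}$-OrbRef$(T)\subseteq\mathbb{F}$-Orb$(T)$; since the reverse inclusion and the SOT-closedness of $\mathbb{F}$-OrbRef$(T)$ are automatic, $\mathbb{F}$-OrbRef$(T)$ equals the SOT-closure of $\mathbb{F}$-Orb$(T)$, i.e. $T$ is $\mathbb{F}$-orbit reflexive.
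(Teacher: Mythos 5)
Your proof is correct and follows essentially the same route as the paper's: fix a member $X_{\lambda_{0}}$ of the family containing a vector $e$ with $Se\neq0$, apply the preceding theorem on that subspace to get $S=\beta T^{n}$ there, propagate the single fixed pair $(\beta,n)$ upward through the directed family via the ``Moreover'' uniqueness clause, and conclude by density and continuity. Your replacement of each $X_{\lambda}$ by its closure $Y_{\lambda}$ is a careful refinement the paper glosses over (it asserts $S\left(X_{\lambda_{0}}\right)\subseteq X_{\lambda_{0}}$ outright, which strictly needs closedness since $Sx$ lies only in the closure of $\mathbb{F}$-Orb$\left(T,x\right)$), but this is a detail within the same argument, not a different approach.
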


\begin{enumerate}
\item for every $\lambda\in\Lambda,$ $\cap_{n=1}^{\infty}\overline
{T^{n}\left(  X_{\lambda}\right)  }=\left\{  0\right\}  $, and

\item $\cup_{\lambda\in\Lambda}X_{\lambda}$ is dense in $X.$
\end{enumerate}

Then $T$ is $\mathbb{F}$-orbit reflexive, and $\mathbb{F}$-OrbRef$\left(
T\right)  =\mathbb{F}$-Orb$\left(  T\right)  $. Moreover, if $S\in\mathbb{F}%
$-OrbRef$\left(  T\right)  ,$ $f\in X,$ and $0\neq Sf=\beta T^{k}f$, then
$S=\beta T^{k}$.

\begin{proof}
Suppose $0\neq S\in\mathbb{F}$-OrbRef$\left(  T\right)  $. It follows from
$\left(  2\right)  $ that there is a $\lambda_{0}\in\Lambda$ and an $f\in
X_{\lambda_{0}}$ such that $0\neq Sf.$ However, we must have $S\left(
X_{\lambda_{0}}\right)  \subseteq X_{\lambda_{0}},$ and $S|X_{\lambda_{0}}%
\in\mathbb{F}$-OrbRef$\left(  T|X_{\lambda_{0}}\right)  =\mathbb{F}%
$-Orb$\left(  T\right)  $ (by $\left(  1\right)  $ and the preceding theorem).
Thus there is a unique scalar $\beta$ and an integer $k\geq0$ such that%
\[
S|X_{_{\lambda_{0}}}=\beta T^{k}|X_{\lambda_{0}}.
\]
The same $\beta$ and $k$ must work for any $X_{\lambda}$ that contains
$X_{\lambda_{0}}.$ It follows from the fact that the family is increasingly
directed and $\left(  2\right)  $ that $S=\beta T^{k}$.\bigskip
\end{proof}

I. Kaplansky \cite{Kap} (see also \cite{Lar}, \cite{Lar} , \cite{M}) proved
that a (bounded linear) operator on a Banach space is locally algebraic if and
only if it is algebraic. This immediately gives us the following result from
Corollary \ref{algTrans}.

\begin{proposition}
\label{algebraic}Suppose $X$ is a Banach space over the field $\mathbb{F}%
\in\left\{  \mathbb{R},\mathbb{C}\right\}  $and $T\in B\left(  X\right)  $ is
not algebraic. Then $T$ is algebraically $\mathbb{F}$-orbit reflexive.
\end{proposition}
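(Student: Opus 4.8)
The plan is to reduce the statement, by way of the equivalence just quoted, to the purely algebraic Theorem~\ref{notlocalg}, so that no genuinely new argument is required. The key observation organizing the proof is that the conclusion—algebraic $\mathbb{F}$-orbit reflexivity—is a statement about $\mathbb{F}$-OrbRef$_0(T)$ that involves no topology at all; consequently the analytic hypotheses (that $X$ is complete and that $T$ is bounded) can enter only at a single point, namely to license Kaplansky's theorem.

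First I would invoke that theorem in contrapositive form: on a Banach space a bounded operator is locally algebraic if and only if it is algebraic, so the hypothesis that $T$ is \emph{not} algebraic yields that $T$ is not locally algebraic. This is the one and only place where the completeness of $X$ and the boundedness of $T$ are used.

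Next I would note that $\mathbb{F}\in\{\mathbb{R},\mathbb{C}\}$ is an infinite field, so the hypotheses of Theorem~\ref{notlocalg} are met: $X$ is a vector space over the infinite field $\mathbb{F}$ and $T\in\mathcal{L}(X)$ fails to be locally algebraic. Theorem~\ref{notlocalg} then gives at once that $T$ is algebraically $\mathbb{F}$-orbit reflexive, which is exactly the desired conclusion. (It is worth recording that the relevant algebraic input in this \emph{non}-algebraic case is Theorem~\ref{notlocalg}, rather than Corollary~\ref{algTrans}, which handles the algebraic case.)

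I do not anticipate any real obstacle here: both heavy ingredients are already available—Kaplansky's theorem is cited, and Theorem~\ref{notlocalg} is proved above—so the proof amounts to matching hypotheses. The only points requiring a moment's care are verifying that $\mathbb{F}$ is infinite, so that Theorem~\ref{notlocalg} genuinely applies, and recognizing that boundedness together with completeness is precisely what Kaplansky's theorem consumes in order to upgrade ``not algebraic'' to ``not locally algebraic.''
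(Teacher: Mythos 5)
Your proof is correct and is essentially the paper's own argument: Kaplansky's theorem upgrades ``not algebraic'' to ``not locally algebraic'' (the only use of completeness and boundedness), and then Theorem~\ref{notlocalg} applies since $\mathbb{F}\in\{\mathbb{R},\mathbb{C}\}$ is infinite. Your parenthetical remark is also well taken: the paper's text attributes the deduction to Corollary~\ref{algTrans}, but that corollary concerns \emph{algebraic} transformations and cannot apply here, so the intended reference is indeed Theorem~\ref{notlocalg}, exactly as you use it.
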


\ \bigskip

\bigskip If $T$ is an operator on a Banach space, then $r\left(  T\right)  $
denotes the spectral radius of $T$, i.e.,%
\[
r\left(  T\right)  =\max\left\{  \left\vert \lambda\right\vert :\lambda
\in\sigma\left(  T\right)  \right\}  \text{.}%
\]

\begin{theorem}
Suppose $T\in\mathcal{M}_{d}\left(  \mathbb{C}\right)  $ and $T$ is not
nilpotent. The following are equivalent.

\begin{enumerate}
\item $T$ is $\mathbb{C}$-orbit reflexive

\item Among all the Jordan blocks with eigenvalues having modulus equal to
$r\left(  T\right)  >0,$ the two largest blocks differ in size by at most $1$.
\end{enumerate}
\end{theorem}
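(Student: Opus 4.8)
The plan is to prove both implications after reducing to a normalized situation and computing the sets $\mathbb{C}\text{-}\mathrm{OrbRef}(T)$ and $\overline{\mathbb{C}\text{-}\mathrm{Orb}(T)}$ explicitly. First I would note that $\mathbb{C}$-orbit reflexivity is unchanged under replacing $T$ by $cT$ for any nonzero $c$, since $\mathbb{C}\text{-}\mathrm{Orb}(cT,x)=\mathbb{C}\text{-}\mathrm{Orb}(T,x)$ for every $x$; hence I may assume $r(T)=1$. Writing $\mathbb{C}^{d}=X_{1}\oplus X_{0}$, where $X_{1}$ is the span of the generalized eigenspaces for eigenvalues of modulus $1$ and $X_{0}$ that for eigenvalues of modulus $<1$, both summands are hyperinvariant, so each $S\in\mathbb{C}\text{-}\mathrm{OrbRef}(T)$ splits as $S_{1}\oplus S_{0}$ with $S_{i}\in\mathbb{C}\text{-}\mathrm{OrbRef}(T_{i})$. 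Since $r(T_{0})<1$ gives $\cap_{n}\overline{T_{0}^{n}(X_{0})}=\{0\}$, the earlier spectral-radius theorem forces $S_{0}=\gamma T_{0}^{j}$, so the problem concentrates on $T_{1}$.

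Next I would compute $\overline{\mathbb{C}\text{-}\mathrm{Orb}(T)}$ from the leading-order asymptotics of $T^{n}$. Writing each modulus-$1$ Jordan block as $\lambda_{i}(I+N_{i})$ and letting $m$ be the largest block size occurring among modulus-$1$ eigenvalues, the expansion $(\lambda_{i}(I+N_{i}))^{n}=\lambda_{i}^{n}\sum_{k}\binom{n}{k}N_{i}^{k}$ together with $\binom{n}{k}\sim n^{k}/k!$ shows that $T^{n}/\binom{n}{m-1}$ keeps only the size-$m$ blocks in the limit. Controlling the unimodular phases $\lambda_{i}^{n}$ by Kronecker's theorem then gives
\[
\overline{\mathbb{C}\text{-}\mathrm{Orb}(T)}=\mathbb{C}\text{-}\mathrm{Orb}(T)\cup\mathcal{Z},\qquad\mathcal{Z}=\Big\{\mu\bigoplus_{i}z_{i}N_{i}^{m-1}:\mu\in\mathbb{C},\ (z_{i})\in\mathcal{T}\Big\},
\]
where the sum runs over the eigenvalues possessing a size-$m$ block and $\mathcal{T}$ is the closure of the cyclic group $\{(\lambda_{i}^{n})_{i}\}$ in the torus. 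As $\mathcal{Z}\subseteq\overline{\mathbb{C}\text{-}\mathrm{Orb}(T)}\subseteq\mathbb{C}\text{-}\mathrm{OrbRef}(T)$ automatically, reflexivity amounts to the reverse inclusion $\mathbb{C}\text{-}\mathrm{OrbRef}(T)\subseteq\mathbb{C}\text{-}\mathrm{Orb}(T)\cup\mathcal{Z}$.

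For the failure direction, suppose (2) fails; then there is a unique size-$m$ block, with eigenvalue $\lambda_{0}$, and every other modulus-$1$ block has size at most $m-2$. After scaling I may take $\lambda_{0}=1$ and let $e_{1},\dots,e_{m}$ be its Jordan chain with $Ne_{j}=e_{j-1}$. I would exhibit the witness $S=e_{1}\otimes e_{m-1}^{\ast}$, the rank-one map with $Se_{m-1}=e_{1}$ that annihilates all other basis vectors. If the $e_{m-1}$-coordinate of $x$ is nonzero, the big block dominates $T^{n}x$ at order $n^{m-1}$ or $n^{m-2}$ while every other block grows at order at most $n^{m-3}$---here the gap is used decisively---and since $\lambda_{0}=1$ the leading term is a nonzero multiple of $e_{1}$; thus $\mathbb{C}e_{1}\subseteq\overline{\mathbb{C}\text{-}\mathrm{Orb}(T,x)}$ and $Sx\in\mathbb{C}e_{1}$ lies in the closure, while for the remaining $x$ we have $Sx=0$. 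Hence $S\in\mathbb{C}\text{-}\mathrm{OrbRef}(T)$, yet $S$ is neither a scalar multiple of a power (its $e_{m-1}$-coordinate vanishes, forcing the scalar to be $0$) nor a member of $\mathcal{Z}$ (which sees only $e_{m}$), so $T$ is not $\mathbb{C}$-orbit reflexive. Had some block had size $m-1$, its top would grow at the same order $n^{m-2}$ as $e_{m-1}$, the limit direction would pick up a second eigenvector, and $Sx$ would leave the closure.

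The hard direction is (2)$\Rightarrow$(1): assuming the two largest modulus-$1$ blocks differ in size by at most $1$, I must show every $S\in\mathbb{C}\text{-}\mathrm{OrbRef}(T)$ lies in $\mathbb{C}\text{-}\mathrm{Orb}(T)\cup\mathcal{Z}$. The strategy is to test $S$ on vectors $v$ exciting a top (depth $m-1$) level: the local condition puts $Sv$ either on a genuine orbit line $\mathbb{C}T^{k}v$ or in the cone of limit directions. In the first case I would propagate $Sv=\beta T^{k}v$ across a finite-dimensional invariant subspace exactly as in the proofs of Lemma \ref{fdlem} and Theorem \ref{findim}, invoking the algebraic identity $\mathbb{C}\text{-}\mathrm{OrbRef}_{0}(T)=\mathbb{C}\text{-}\mathrm{Orb}(T)$ to conclude $S=\beta T^{k}$; in the second I would show $S\in\mathcal{Z}$. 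The main obstacle is the rigidity required in this limiting case: hypothesis (2) furnishes a genuine tie in growth rates---either two size-$m$ blocks, or a size-$(m-1)$ block tying with the sub-top level of the size-$m$ block---and I must combine these coincidences with Kronecker/Weyl control of the torus $\mathcal{T}$ and a gluing argument across the blocks and across $X_{1}\oplus X_{0}$ to force $S$ onto a single limit direction with one global scalar. Making this simultaneous rigidity precise when several dominant eigenvalues carry arbitrary multiplicative relations is where the real work lies.
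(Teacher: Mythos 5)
Your setup is sound and, where it is complete, it matches the paper: the normalization $r(T)=1$, the splitting $S=S_{1}\oplus S_{0}$ along the spectral subspaces $X_{1}\oplus X_{0}$ (with the earlier theorem on $\cap_{n}\overline{T^{n}(X)}=\{0\}$ handling $S_{0}$), and the identification of $\overline{\mathbb{C}\text{-}\mathrm{Orb}(T)}$ as $\mathbb{C}\text{-}\mathrm{Orb}(T)\cup\mathcal{Z}$ are all correct. Your witness for the failure direction, $S=e_{1}\otimes e_{m-1}^{\ast}$, is essentially the paper's witness $Sx=\left[\left\langle x,e_{0}\right\rangle +\left\langle x,e_{1}\right\rangle \right]e_{m-1}$ written in the opposite indexing, and your verification that $S\in\mathbb{C}\text{-}\mathrm{OrbRef}(T)$ uses exactly the paper's two normalizations $\binom{n}{m-1}$ and $\binom{n}{m-2}$ together with the gap $m_{1}\leq m-2$; so the implication $(1)\Rightarrow(2)$ is in good shape.

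The implication $(2)\Rightarrow(1)$, however, is a genuine gap, not a completion left to the reader: you state yourself that ``making this simultaneous rigidity precise \ldots\ is where the real work lies,'' and that rigidity is the actual content of the theorem. Moreover, the one concrete tool you propose for the orbit-line case does not apply as stated: Lemma \ref{fdlem} and Theorem \ref{findim} are statements about $\mathbb{C}\text{-}\mathrm{OrbRef}_{0}$, i.e., they require the \emph{exact} membership $Sw\in\mathbb{C}\text{-}\mathrm{Orb}(T,w)$ for every vector $w$, whereas $S\in\mathbb{C}\text{-}\mathrm{OrbRef}(T)$ only gives $Sw\in\overline{\mathbb{C}\text{-}\mathrm{Orb}(T,w)}$; so the algebraic propagation arguments cannot be invoked and must be re-run with sequences. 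That is precisely what the paper's case analysis does, and it requires several ingredients absent from your outline: (i) for $m\geq2$, pairing $S(e_{0}+x)=\lim_{n}c_{n}T^{k_{n}}(e_{0}+x)$ with the two top coordinates of the Jordan chain to force $c_{n}\rightarrow\lambda$ and $k_{n}\rightarrow k$ (or $k_{n}\rightarrow\infty$), which is what lets one scalar and one exponent be carried across all blocks simultaneously; (ii) a Baire-category argument to glue in the strictly contractive summand $A$, namely $\mathrm{Dom}(B)=\ker B\cup\bigcup_{n\geq0}\ker\left(B-\lambda A^{n}\right)$ forces $B=\lambda A^{k}$; (iii) for $m=1$, where there is no second coordinate from which to read off $k_{n}$, the $\mathbb{C}$-orbit reflexivity of normal operators applied to $1\oplus U$ --- your ``Kronecker/Weyl control of the torus'' gestures at this but is never carried out; and (iv) the boundary case $m=2$, $m_{1}=1$, which needs a separate perturbation argument (the paper's choice $y=\frac{1}{2(\beta-1)}$). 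Until these are supplied, you have proved only one half of the equivalence.
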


\begin{proof}
We begin with some basic computations. Suppose $J_{m}$ is an $m\times m$
nilpotent Jordan block, i.e., there is an orthonormal basis $\left\{
e_{0},\ldots,e_{m-1}\right\}  $ for the domain of $J_{m}$ such that
$J_{m}e_{m-1}=0$ and $J_{m}e_{k}=e_{k+1}$ for $0\leq k<m-1$. Note that $J_{m}$
is lower triangular with respect to the basis $\left\{  e_{0},\ldots
,e_{m-1}\right\}  $. Then, for $\alpha\in\mathbb{C}$ with $\left\vert
\alpha\right\vert \leq1$ and $n\geq0$, we have from the binomial theorem that%
\[
\left\Vert \left(  \alpha+J_{m}\right)  ^{n}\right\Vert =\left\Vert \sum
_{k=0}^{n}\binom{n}{k}\alpha^{n-k}J_{m}^{k}\right\Vert =\left\Vert \sum
_{k=0}^{\min(n,m-1)}\binom{n}{k}\alpha^{n-k}J_{m}^{k}\right\Vert \leq
mn^{m-1}\left\vert \alpha\right\vert ^{n-m+1},
\]
so if $\left\vert \alpha\right\vert =1$ and $N>m-1$ or $\left\vert
\alpha\right\vert <1$, then%
\[
\lim_{n\rightarrow\infty}\frac{1}{\binom{n}{N}}\left\Vert \left(  \alpha
+J_{m}\right)  ^{n}\right\Vert =0.
\]
Moreover, for $0\leq t<m-1$, we have%
\[
\left(  \alpha+J_{m}\right)  ^{n}e_{t}=\sum_{k=0}^{m-1}\binom{n}{k}%
\alpha^{n-k}J_{m}^{k}J_{m}^{t}e_{0}=\sum_{k=0}^{m-t-1}\binom{n}{k}\alpha
^{n-k}e_{t+k},
\]
and, when $m\geq2,$ we have%
\[
\left\langle \left(  \alpha+J_{m}\right)  ^{n}e_{0},e_{0}\right\rangle
=\alpha^{n}\text{ and }\left\langle \left(  \alpha+J_{m}\right)  ^{n}%
e_{0},e_{1}\right\rangle =n\alpha^{n-1}%
\]
In particular, if $\left\vert \alpha\right\vert =1$ and $0\leq i<m$ we have%
\[
\lim_{n\rightarrow\infty}\frac{1}{\binom{n}{m-i-1}\alpha^{n-m+i+1}}\left(
\alpha+J_{m}\right)  ^{n}e_{i}=e_{m-1}.
\]

We can assume $T$ is already equal to its Jordan canonical form. By replacing
$T$ with $\frac{1}{\lambda}T,$ where $\lambda$ is an eigenvalue of $T$ with
modulus $r\left(  T\right)  >0$ and having the largest, say $m\times m$,
Jordan block among such eigenvalues, we can assume that this largest block has
eigenvalue $1$. We write $T$ as
\[
T=\left(  1+J_{m}\right)  \oplus%
{\displaystyle\sum\nolimits_{1\leq i\leq s}^{\oplus}}
\left(  \alpha_{i}+J_{m_{i}}\right)  \oplus A
\]
with each $\left\vert \alpha_{i}\right\vert =1,$ and $m\geq m_{1}\geq
\cdots\geq m_{s}$, and the modulus of every eigenvalue of $A$ less than $1$.
It follows that $A^{n}\rightarrow0$ as $n\rightarrow\infty$. Note that we
allow the possibility that $s=0$ or $A$ is not present.

$\left(  1\right)  \Longrightarrow\left(  2\right)  $. Assume $m_{1}\leq m-2$,
i.e., the second largest Jordan block for the eigenvalues with modulus
$r\left(  T\right)  $ differs from $m$ by more than $1$. In this case
$m\geq2.$ Let $\left\{  e_{0},\ldots,e_{m-1}\right\}  $ be the orthonormal
basis above. Define a linear transformation $S$ in terms of the inner product
$\left\langle ,\right\rangle $ on $\mathbb{C}^{n}$ by%
\[
Sx=\left[  \left\langle x,e_{0}\right\rangle +\left\langle x,e_{1}%
\right\rangle \right]  e_{m-1}.
\]
Note that%
\[
\left(  T-1\right)  Se_{0}=J_{m}e_{m-1}=0,
\]
but%
\[
S\left(  T-1\right)  e_{0}=Se_{1}=e_{m-1}\neq0.
\]
Hence $ST\neq TS,$ so $S$ is not in the SOT-closure of $\mathbb{C}%
$-Orb$\left(  T\right)  $. However, we will show that $S\in\mathbb{C}%
$-OrbRef$\left(  T\right)  $. If $x$ is a vector and $Sx=0$, then $Sx=0\cdot
T^{1}x\in\mathbb{C}$-Orb$\left(  T,x\right)  $. If $Sx=\beta e_{m}\neq0$, then
either $\left\langle x,e_{0}\right\rangle \neq0$ or both $\left\langle
x,e_{0}\right\rangle =0$ and $\left\langle x,e_{1}\right\rangle \neq0$. In
case $\left\langle x,e_{0}\right\rangle \neq0$, we have%
\[
Sx=\lim_{n\rightarrow\infty}\frac{\beta}{\left\langle x,e_{0}\right\rangle
}\frac{1}{\binom{n}{m-1}}T^{n}x\in\mathbb{C}\text{-}Orb\left(  T,x\right)
^{-}.
\]
In case $\left\langle x,e_{0}\right\rangle =0$ and $\left\langle
x,e_{1}\right\rangle \neq0,$ we have%
\[
Sx=\lim_{n\rightarrow\infty}\frac{\beta}{\left\langle x,e_{1}\right\rangle
}\frac{1}{\binom{n}{m-2}}T^{n}x\in\mathbb{C}\text{-}Orb\left(  T,x\right)
^{-}.
\]
Both of the above two formulas hold because
\[
\lim_{n\rightarrow\infty}\frac{1}{\binom{n}{m-2}}\left\Vert \left(  \alpha
_{i}+J_{m_{i}}\right)  ^{n}\right\Vert =0
\]
for $1\leq i\leq s$ since either $m_{i}\leq m-2$ and $A^{n}\rightarrow0$.

$\left(  2\right)  \Longrightarrow\left(  1\right)  $. Let $\left\{
f_{0},\ldots,f_{m_{1}-1}\right\}  $ be the orthonormal basis shifted by
$J_{m_{1}}$. Suppose $S\in\mathbb{C}$-OrbRef$\left(  T\right)  $. Relative to
the direct sum decomposition for $T$ above, we can write
\[
S=S_{0}\oplus S_{1}\oplus\cdots\oplus S_{r}\oplus B.
\]

In order to prove $S\in\mathbb{C}$-Orb$\left(  T\right)  ^{-SOT}$, we consider
each of the following cases.

\textbf{Case 1 }$m=1$. Then $T=1\oplus U\oplus A$ with $U$ unitary. Write
$S=\lambda\oplus D\oplus B$.

\textbf{Subcase 1.1} $\lambda=0.$ Suppose $x$ is in the domain of $U\oplus A$.
Then there is a sequence $\left\{  c_{n}\right\}  $ in $\mathbb{C}$ and a
sequence $\left\{  k_{n}\right\}  $ of integers such that
\[
Se_{0}\oplus Sx=S\left(  e_{0}\oplus x\right)  =\lim_{n\rightarrow\infty}%
c_{n}T^{k_{n}}\left(  e_{0}\oplus x\right)  =
\]%
\[
\lim_{n\rightarrow\infty}c_{n}e_{0}\oplus\lim_{n\rightarrow\infty}c_{n}\left(
U\oplus A\right)  ^{k_{n}}x.
\]
Thus $c_{n}\rightarrow0,$ and, since $\left\{  \left\Vert \left(  U\oplus
A\right)  ^{k_{n}}\right\Vert \right\}  $ is bounded, $Sx=0$. Hence, $S=0$.

\textbf{Subcase 1.2} $B=0$, so $S=\lambda\oplus D\oplus0$. It is well-known
that if $\alpha\in\mathbb{C}$ and $\left\vert \alpha\right\vert =1$, then
there is a sequence $\left\{  k_{n}\right\}  \rightarrow\infty$ such that
$\alpha^{k_{n}}\rightarrow1$. Thus there is a sequence $\left\{
k_{n}\right\}  \rightarrow\infty$ such that $U^{k_{n}}\rightarrow1.$ Thus
$T^{k_{n}+1}\rightarrow1\oplus U\oplus0.$ Since $1\oplus U$ is normal,
$\lambda\oplus D\in\mathbb{C}$-Orb$\left(  1\oplus U\right)  ^{-SOT}$. Hence
$S\in\mathbb{C}$-Orb$\left(  T\right)  ^{-SOT}.$

\textbf{Subcase 1.3}. $B\neq0$, $\lambda\neq0.$ Then, for every $x\in DomA$,
there are sequences $\left\{  c_{n}\right\}  $ and $\left\{  k_{n}\right\}  $
such that
\[
\lambda e_{0}\oplus0\oplus Bx=S\left(  e_{0}\oplus0\oplus x\right)  =\lim
c_{n}T^{k_{n}}\left(  e_{0}\oplus0\oplus x\right)  ,
\]
which implies$,$ $c_{n}\rightarrow\lambda$. By choosing a subsequence, we can
assume $k_{n}\rightarrow\infty$ or $k_{n}\rightarrow k<\infty$. If
$k_{n}\rightarrow\infty$, then $Bx=0.$ If $k_{n}\rightarrow k$, then
$Bx=\lambda A^{k}x.$ Hence
\[
Dom\left(  B\right)  =\ker B\cup\bigcup_{n=0}^{\infty}\ker\left(  B-\lambda
A^{n}\right)  ,
\]
which implies, by the Baire Category theorem, that $B\in\left\{
0,\lambda,\lambda A,\lambda A^{2},\ldots\right\}  $. Since $B\neq0$, there is
a $k\geq0$ such that $B=\lambda A^{k}$. Since $1-A^{t}$ is invertible for
$t\geq1$, and $A^{k}=\frac{1}{\lambda}B\neq0$, the integer $k$ is unique.
Applying the same technique with $e_{0}$ replaced with an eigenvector for $U,$
we get $D=\lambda U^{k}$. Thus, $S=\lambda T^{k}$.

\textbf{Case 2} $m\geq2$. Then $m_{1}\geq m-1\geq1$. Write $S=S_{0}\oplus
S_{1}\oplus\cdots\oplus S_{s}\oplus B$.

\textbf{Subcase 2.1} $B\neq0$. As in Subcase 1.3, $S=\lambda T^{k}$.

\textbf{Subcase 2.2}. $Se_{0}=0.$ Suppose $x$ is orthogonal to the domain of
$S_{0}$. Then
\[
S\left(  e_{0}\oplus x\right)  =\lim_{n\rightarrow\infty}c_{n}T^{k_{n}}%
e_{0}\oplus c_{n}T^{k_{n}}e_{0}x
\]
implies that
\[
\lim_{n\rightarrow\infty}c_{n}\binom{k_{n}}{m-1}=0,
\]
which implies
\[
\lim_{n\rightarrow\infty}c_{n}\left\Vert \left(  \left(  1+J_{m}\right)
\oplus\left(  \alpha_{1}+J_{m_{1}}\right)  \oplus\cdots\oplus\left(
\alpha_{r}+J_{m_{r}}\right)  \oplus A\right)  ^{k_{n}}\right\Vert
\rightarrow0.
\]
This, in turn, implies $Sx=0.$ Thus $S_{1}\oplus\cdots\oplus S_{r}\oplus B=0$.
Applying the same idea with $e_{0}$ replaced with $f_{0}$ and $x$ replaced
with any of $e_{1},\ldots,e_{m-1}$, we conclude that $S_{0}=0.$ Hence $S=0$.

\textbf{Subcase 2.3} $Se_{0}=\lambda T^{k}e_{0}\neq0$. Suppose $x$ is
orthogonal to the domain of $S_{0}$ and
\[
S\left(  e_{0}+x\right)  =\lim_{n\rightarrow\infty}c_{n}T^{k_{n}}\left(
e_{0}+x\right)  .
\]
We then get
\[
\lim_{n\rightarrow\infty}c_{n}=\lim_{n\rightarrow\infty}\left\langle
c_{n}T^{k_{n}}e_{0},e_{0}\right\rangle =\left\langle Se_{0},e_{0}\right\rangle
=\lambda,
\]
and%
\[
\lim_{n\rightarrow\infty}k_{n}=\lim_{n\rightarrow\infty}\frac{1}{c_{n}%
}\left\langle c_{n}T^{k_{n}}e_{0},e_{1}\right\rangle =\left\langle \frac
{1}{\lambda}Se_{0},e_{1}\right\rangle =k.
\]
Hence $Sx=\lambda T^{k}x$ for every $x$ orthogonal to the domain of $S_{0}$.
In particular%
\[
S_{1}=\lambda\left(  \alpha_{1}+J_{m_{1}}\right)  ^{k}.
\]
If $m_{1}\geq2$, we can make the same argument with $e_{0}$ replaced with
$f_{0}$ to get that $S_{0}=\lambda\left(  1+J_{m}\right)  ^{k}$, implying
$S=\lambda T^{k}$. If $m_{1}=1,$ then $m=2,$ and we need only show that
$Se_{1}=\lambda T^{k}e_{1};$ but we know that $Te_{1}=e_{1}$. Suppose
$Se_{1}=\lambda\beta e_{1}$ and $\beta\neq1.$ Choose $y=\frac{1}{2\left(
\beta-1\right)  }$; we can write%
\[
\lambda e_{0}+\left(  \lambda k+\lambda\beta y\right)  e_{1}=S\left(
e_{0}+ye_{1}\right)  =\lim_{n\rightarrow\infty}d_{n}T^{j_{n}}\left(
e_{0}+ye_{1}\right)  =\lim_{n\rightarrow\infty}d_{n}e_{0}+\left(  d_{n}%
j_{n}+y\right)  e_{1},
\]
which implies that
\[
d_{n}\rightarrow\lambda,
\]
and
\[
j_{n}-k\rightarrow\left(  \beta-1\right)  y=\frac{1}{2},
\]
which is impossible. Thus $Se_{1}=\lambda e_{1},$ and $S=\lambda T^{k}$.

\textbf{Subcase 2.4} $Se_{0}=\lim c_{n}T^{k_{n}}e_{0}\neq0$ with
$n_{k}\rightarrow\infty$. Using the ideas in the proof of Subcase 2.3, we get
$B=0,$ and $S_{j}=0$ when $m_{j}<m,$ and $S_{j}|ran\left(  J_{m_{j}}\right)
=0$ when $m_{j}=m$. If $m_{1}=m,$ we can apply the same reasoning to
$e_{i}\oplus f_{0}$ for $1\leq i<m$ to get $Se_{i}=0,$ and then applying $S$
to a sum of $e_{0}\oplus f_{0}\oplus h_{2}\oplus\cdots\oplus h_{t}\oplus0$,
where $m_{j}=m$ and $h_{j}\in\ker J_{m_{j}}^{\ast}$ for $1\leq j\leq t$, we
conclude that there are sequences $\left\{  d_{n}\right\}  $ and $\left\{
j_{n}\right\}  $ such that $S=\lim_{n\rightarrow\infty}d_{n}T^{j_{n}}$ and
$\lim_{n\rightarrow\infty}j_{n}=\infty$. If $m_{1}=m-1,$ then $Sf_{0}=0,$ and
we can still look at $S\left(  e_{i}\oplus f_{0}\right)  $ for $1\leq i<m$ to
get $Se_{i}=0$. In this case we get $S=\lim_{n\rightarrow\infty}c_{n}T^{k_{n}%
}$.

Hence in all of the possible cases, $S\in\mathbb{C}$-Orb$\left(  T\right)
^{-SOT}.$ Thus $T$ is $\mathbb{C}$-orbit reflexive.
\end{proof}

\bigskip\bigskip

\section{Orbit Reflexivity\bigskip}

We conclude with a few results on orbit reflexivity, most of which appeared in
\cite{McH}. A key ingredient in the results of this section comes from
\cite[Theorem 5 (1)]{HNRR}, which uses a simple Baire category argument to
show, for an operator $T$ on a Banach space, that if $Orb\left(  T,x\right)  $
is closed for every $x$ in a nonempty open set, then $T$ is orbit reflexive.

\begin{lemma}
\label{fhs} Suppose $X$ is a normed space, $T\in B\left(  X\right)  ,$
$\lambda$ is an eigenvalue of the adjoint $T^{\#}$ of $T$ with unit
eigenvector $\alpha\in X^{\#}$ and $\left\vert \lambda\right\vert >1$. Then
$T$ is orbit reflexive.
\end{lemma}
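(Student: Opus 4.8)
The plan is to reduce everything to the Baire-category criterion quoted just before the lemma from \cite[Theorem 5 (1)]{HNRR}: to conclude that $T$ is orbit reflexive it suffices to produce a nonempty open subset $U$ of $X$ on which every orbit $Orb\left(  T,x\right)  =\left\{  T^{n}x:n\geq0\right\}  $ is closed. The natural candidate is
\[
U=\left\{  x\in X:\alpha\left(  x\right)  \neq0\right\}  ,
\]
which is open because $\alpha$ is a continuous functional and nonempty because $\alpha$ is a unit vector, hence nonzero.

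The whole argument hinges on one elementary computation with the adjoint eigenvector. Since $T^{\#}\alpha=\lambda\alpha$, for every $x\in X$ and every $n\geq0$ we have $\alpha\left(  T^{n}x\right)  =\left(  \left(  T^{\#}\right)  ^{n}\alpha\right)  \left(  x\right)  =\lambda^{n}\alpha\left(  x\right)  $. Because $\left\Vert \alpha\right\Vert =1$, this produces the lower bound
\[
\left\Vert T^{n}x\right\Vert \geq\left\vert \alpha\left(  T^{n}x\right)  \right\vert =\left\vert \lambda\right\vert ^{n}\left\vert \alpha\left(  x\right)  \right\vert .
\]
For $x\in U$ we have $\left\vert \alpha\left(  x\right)  \right\vert >0$, so the hypothesis $\left\vert \lambda\right\vert >1$ forces $\left\Vert T^{n}x\right\Vert \rightarrow\infty$ along any sequence of exponents tending to infinity.

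First I would use this blow-up to show that $Orb\left(  T,x\right)  $ is closed for each $x\in U$. Suppose $T^{n_{k}}x\rightarrow y$ for some sequence of nonnegative integers $n_{k}$. A convergent sequence is bounded, so the estimate above rules out $n_{k}\rightarrow\infty$ along any subsequence; hence the exponents $n_{k}$ stay bounded, a bounded sequence of integers attains some value $n$ infinitely often, and along that subsequence $T^{n_{k}}x=T^{n}x$ is constant. Therefore $y=T^{n}x\in Orb\left(  T,x\right)  $, and the orbit is closed. Applying \cite[Theorem 5 (1)]{HNRR} to the nonempty open set $U$ then yields that $T$ is orbit reflexive.

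The content is essentially forced once the open set $U$ and the norm estimate are written down, so there is no real computational obstacle. The one point deserving attention is that the cited Baire-category criterion is stated for operators on a Banach space, whereas the lemma is phrased for a general normed space; at the invocation of \cite[Theorem 5 (1)]{HNRR} one must therefore either work with the completion (noting that $\alpha$ extends and the orbit-closedness computation is unchanged) or verify directly that the Baire argument of \cite{HNRR} goes through on $U$. Apart from that bookkeeping, the proof is just the adjoint eigenvalue identity combined with the closed-orbits-on-an-open-set criterion.
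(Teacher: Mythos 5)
Your proof is correct and follows essentially the same route as the paper's: the same open set $\left\{ x\in X:\alpha\left( x\right) \neq0\right\}$, the same estimate $\left\Vert T^{n}x\right\Vert \geq\left\vert \lambda\right\vert ^{n}\left\vert \alpha\left( x\right) \right\vert$ obtained from the adjoint eigenvector, and the same appeal to \cite[Theorem 5 (1)]{HNRR}. The only differences are that you spell out why the growth estimate forces each orbit in that open set to be closed (the paper leaves this implicit) and that you flag the normed-space-versus-Banach-space mismatch in the citation, a point the paper passes over in silence.
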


\begin{proof}
Suppose $f\in X$ and $\left\langle f,\alpha\right\rangle =\alpha\left(
f\right)  \neq0.$ Then
\[
\left\Vert T^{n}f\right\Vert \geq\left\vert \left\langle T^{n}f,\alpha
\right\rangle \right\vert =\left\vert \left\langle f,\left(  T^{n}\right)
^{\ast}\alpha\right\rangle \right\vert =\left\vert \lambda\right\vert
^{n}\left\vert \left\langle f,\alpha\right\rangle \right\vert \rightarrow
\infty
\]
as $n\rightarrow\infty$. Since $\left\{  f\in X:\left\langle f,\alpha
\right\rangle \neq0\right\}  $ is an open set, it follows from \cite[Theorem 5
(1)]{HNRR} that $T$ is orbit reflexive.
\end{proof}

\begin{corollary}
Suppose $X$ is a functional Hilbert space on a set $E$, and $f:E\rightarrow
\mathbb{C}$ is a multiplier of $X$ such that $\left\Vert M_{f}\right\Vert
=\sup\left\{  \left\vert f\left(  t\right)  \right\vert :t\in E\right\}  $.
Then $M_{f}$ is orbit-reflexive.
\end{corollary}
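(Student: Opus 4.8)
The plan is to write $r=\|M_{f}\|=\sup_{t\in E}|f(t)|$ and argue by cases on the size of $r$, using that the reproducing kernels of $X$ are eigenvectors of the adjoint of $M_{f}$. In a functional Hilbert space each point evaluation is a nonzero bounded functional, so for every $t\in E$ the reproducing kernel $k_{t}$ is nonzero and satisfies $\langle g,k_{t}\rangle=g(t)$ for all $g\in X$. Since $f$ is a multiplier, $\langle M_{f}g,k_{t}\rangle=(fg)(t)=f(t)\langle g,k_{t}\rangle$, so the functional $\alpha_{t}=\langle\,\cdot\,,k_{t}\rangle\in X^{\#}$ is an eigenvector of the adjoint $M_{f}^{\#}$ with eigenvalue $f(t)$, and normalizing gives a unit eigenvector. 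In particular $f(t)\in\sigma(M_{f})$ for every $t$, so $r(M_{f})=\sup_{t}|f(t)|=\|M_{f}\|$; this normaloid identity underlies the case split below.

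First I would dispose of $r>1$, the intended application of Lemma~\ref{fhs}. Since $\sup_{t}|f(t)|=r>1$, there is a $t_{0}$ with $|f(t_{0})|>1$; by the previous paragraph $M_{f}^{\#}$ has a unit eigenvector $\alpha_{t_{0}}$ with eigenvalue $f(t_{0})$ of modulus exceeding $1$, so Lemma~\ref{fhs} shows at once that $M_{f}$ is orbit reflexive.

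Next I would treat $r<1$ by a Baire category argument. Here $\|M_{f}^{n}\|\le r^{n}\to 0$, so $M_{f}^{n}x\to 0$ for every $x$ and $\operatorname{Orb}(M_{f},x)^{-}=\{M_{f}^{k}x:k\ge 0\}\cup\{0\}$. If $A\in\operatorname{OrbRef}(M_{f})$, then for each $x$ either $Ax=M_{f}^{k}x$ for some $k$ or $Ax=0$; that is,
\[
X=\Big(\bigcup_{k\ge 0}\ker\big(A-M_{f}^{k}\big)\Big)\cup\ker A .
\]
This is a countable union of closed subspaces of the complete space $X$, so by Baire one of them has nonempty interior and hence equals $X$; thus $A=M_{f}^{k}$ for some $k$, or $A=0$. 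Since $M_{f}^{n}\to 0$ in the strong operator topology, this says precisely that $\operatorname{OrbRef}(M_{f})$ equals the SOT-closure $\{M_{f}^{k}:k\ge 0\}\cup\{0\}$ of $\operatorname{Orb}(M_{f})$, so $M_{f}$ is orbit reflexive.

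The remaining case $r=1$ is the main obstacle, since neither mechanism survives: $M_{f}$ is then a contraction, so its orbits are bounded and the Baire argument fails because the orbit closures acquire limit points that are neither $M_{f}^{k}x$ nor $0$, while $M_{f}^{\#}$ has no eigenvalue of modulus exceeding $1$, so Lemma~\ref{fhs} does not apply. The behavior even bifurcates: for $f\equiv c$ with $|c|=1$ not a root of unity the orbits $\{c^{n}x\}$ are dense in circles and are never closed, so the criterion of \cite[Theorem 5(1)]{HNRR} is useless and one must instead use that $cI$ is normal; whereas for the unilateral shift each nonzero orbit is closed and spread out. My plan would be to split $M_{f}$ as a direct sum of a unitary summand---which is normal, hence orbit reflexive by \cite[Proposition 3]{HNRR}---and a complementary contraction on which $M_{f}^{n}\to 0$ weakly; on the latter I would show, generalizing the shift computation, that $\{M_{f}^{n}x\}$ is asymptotically orthogonal and hence that each orbit is closed, so that \cite[Theorem 5(1)]{HNRR} applies. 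The crux I expect to require the most work is performing this decomposition compatibly with the reproducing-kernel structure and then recombining the two analyses, since orbit reflexivity need not pass to or from direct summands.
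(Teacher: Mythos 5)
Your treatment of the case $\sup_{t}\left\vert f(t)\right\vert >1$ is exactly the paper's (reproducing kernels as eigenvectors of $M_{f}^{\#}$, then Lemma \ref{fhs}), and your Baire-category argument for $\sup_{t}\left\vert f(t)\right\vert <1$ is correct; but the proof is not complete. The case $r=\sup_{t}\left\vert f(t)\right\vert =1$, which you explicitly leave as a plan, is where all the remaining content lies, and the paper disposes of it with a single citation that your proposal is missing: the hypothesis $\left\Vert M_{f}\right\Vert =\sup_{t}\left\vert f(t)\right\vert$ is in the statement precisely so that, when no $\left\vert f(t)\right\vert$ exceeds $1$, the operator $M_{f}$ is a contraction, and \cite{HNRR} contains the theorem that \emph{every} Hilbert-space contraction is orbit reflexive. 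Invoking that theorem settles all of $r\leq1$ at once (subsuming your $r<1$ case as well); without it you are in effect forced to reprove a genuinely nontrivial result.

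Moreover, the sketch you offer for $r=1$ contains a false step. It is not true that on the completely non-unitary summand the orbits are closed, so \cite[Theorem 5 (1)]{HNRR} need not apply there. Concretely, let $X=H^{2}$ viewed as a functional Hilbert space on $E=\mathbb{D}$ and let $f(z)=\left(  1+z\right)  /2$. Then $\left\Vert M_{f}\right\Vert =\sup_{t\in E}\left\vert f(t)\right\vert =1$ and $M_{f}$ has no unitary summand at all, yet $\left\vert f\right\vert <1$ almost everywhere on the unit circle, so dominated convergence gives
\[
\left\Vert M_{f}^{n}h\right\Vert ^{2}=\int_{\mathbb{T}}\left\vert f\right\vert
^{2n}\left\vert h\right\vert ^{2}\,dm\rightarrow0
\]
for every $h\in H^{2}$; thus $0$ is a limit point of every nonzero orbit but never a member of it, and no nonzero orbit is closed. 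This particular operator is still harmless, since $M_{f}^{n}\rightarrow0$ in SOT and your $r<1$ argument adapts, but it destroys the proposed mechanism ``asymptotic orthogonality, hence closed orbits''; a general completely non-unitary contraction mixes the shift-like and norm-nullifying behaviors, and sorting that out---together with the direct-sum recombination problem you yourself flag---is exactly the content of the contraction theorem in \cite{HNRR} that the paper cites.
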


\begin{proof}
Suppose $t\in E$ and let $e_{t}\in X^{\#}$ be the evaluation functional at
$t.$ Then, for every $h\in X,$ we have%
\[
\left(  M_{f}^{\#}\left(  e_{t}\right)  \right)  h=e_{t}\left(  M_{f}h\right)
=e_{t}\left(  fh\right)  =f\left(  t\right)  e_{t}\left(  h\right)  .
\]
Thus $M_{f}^{\#}e_{t}=f\left(  t\right)  e_{t}$ for every $t\in E$. If
$\left\vert f\left(  t\right)  \right\vert >1$ for some $t\in E$, then it
follows from Lemma \ref{fhs} that $M_{f}$ is orbit reflexive. Otherwise,
$\left\Vert M_{f}\right\Vert \leq1,$ which, by \cite{HNRR}, implies $M_{f}$ is
orbit reflexive.
\end{proof}

\bigskip

In \cite{McH} the third author used the preceding corollary and a result of J.
E. Thomson \cite{T} concerning bounded point evaluations for cyclic subnormal
operators to show that every cyclic subnormal operator is a multiplication on
a functional Hilbert space, implying that every cyclic subnormal operator is
orbit reflexive. Here we give a more elementary proof that every subnormal
operator is orbit reflexive.

\bigskip

\begin{theorem}
Suppose $H$ is a Hilbert space and $T\in B\left(  H\right)  $ is subnormal.
Then $T$ is orbit reflexive.
\end{theorem}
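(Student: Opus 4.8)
The plan is to reduce the subnormal case to the situation covered by Lemma~\ref{fhs}, using the spectral geometry of subnormal operators. Let $N$ on $K\supseteq H$ be the minimal normal extension of $T$. The key dichotomy I would use is based on the spectral radius: either $r(T)>1$ or $r(T)\leq 1$. In the second case, since $T$ is subnormal we have $\|T\|=r(T)\leq 1$ (the norm of a subnormal operator equals its spectral radius, as $\|T^n\|=\|N^n\|=r(N)^n$ and $r(N)\leq r(T)$ for the minimal normal extension), so $\|T\|\leq 1$ and $T$ is orbit reflexive by the norm-$\leq 1$ result of \cite{HNRR}. Thus the entire problem concentrates on the case $r(T)>1$.

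When $r(T)>1$, I would exploit the fact that the spectrum of a subnormal operator and the spectrum of its minimal normal extension are related by $\sigma(N)\subseteq\sigma(T)$, with $\partial\sigma(T)\subseteq\sigma(N)$ (the normal extension carries the boundary spectrum). The aim is to produce an eigenvalue $\lambda$ of the Banach-space adjoint $T^{\#}$ with $|\lambda|>1$, so that Lemma~\ref{fhs} applies directly. The natural candidate is a point $\lambda\in\sigma(T)$ with $|\lambda|=r(T)>1$ lying in the boundary. For such a boundary point, approximate point spectrum arguments for subnormal operators show $\lambda$ is in the approximate point spectrum of $N$, hence of $T$, and one uses that $\bar\lambda$ is then an eigenvalue of $N^*$; pulling this back through the compression $H$, one obtains that $\bar\lambda$ is an eigenvalue of $T^*$ (equivalently $\lambda$ is in the point spectrum of $T^{\#}$ after the appropriate conjugation on the functional), with $|\lambda|>1$.

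First I would set up the minimal normal extension and record the two spectral facts: $\|T\|=r(T)$ for subnormal $T$, and the boundary-spectrum inclusion $\partial\sigma(T)\subseteq\sigma_{ap}(T)\cap\sigma(N)$. Next I would dispose of the case $r(T)\le 1$ immediately via the norm-$\le1$ theorem of \cite{HNRR}. Then, assuming $r(T)>1$, I would select a maximum-modulus point $\lambda\in\sigma(T)$, argue it lies in $\partial\sigma(T)$, and deduce it is an approximate eigenvalue whose conjugate yields an eigenvector for $N^*$, which compresses to an eigenvector $\alpha\in H^{\#}=H$ of $T^*$ with eigenvalue $\bar\lambda$ of modulus greater than $1$. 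Finally, invoking Lemma~\ref{fhs} completes the proof.

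The main obstacle I anticipate is the step that converts a maximum-modulus \emph{approximate} eigenvalue of $T$ into an honest \emph{eigenvalue} of the adjoint $T^{*}$. For a general operator this conversion fails, but for subnormal operators the normality of $N$ guarantees that approximate eigenvalues of $N$ are genuine eigenvalues of $N^{*}$ (since $\|(N-\lambda)x\|=\|(N^{*}-\bar\lambda)x\|$ for normal $N$); the care is in checking that the resulting eigenvector of $N^*$ has nonzero compression to $H$, equivalently that $\bar\lambda$ truly survives as an eigenvalue of $T^{*}$ rather than living entirely in $K\ominus H$. Handling this compression cleanly, using minimality of the normal extension, is where the real work lies.
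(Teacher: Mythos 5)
There is a genuine gap, and it sits exactly at the step you yourself flagged as ``the main obstacle'': converting a maximum-modulus approximate eigenvalue into an honest eigenvalue of the adjoint. That conversion is false, even for normal operators. The identity $\left\Vert \left(  N-\lambda\right)  x\right\Vert =\left\Vert \left(  N^{\ast}-\bar{\lambda}\right)  x\right\Vert$ only shows $\ker\left(  N-\lambda\right)  =\ker\left(  N^{\ast}-\bar{\lambda}\right)$; it transfers genuine eigenvalues of $N$ to $N^{\ast}$, but says nothing about approximate eigenvalues, because both kernels can be trivial: normal operators can have purely continuous spectrum. A concrete counterexample that destroys your Case $r\left(  T\right)  >1$ entirely: take $T=2U$, where $U$ is the bilateral shift on $\ell^{2}\left(  \mathbb{Z}\right)$ (equivalently, $M_{z}$ on $L^{2}$ of the circle $\left\vert z\right\vert =2$). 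This operator is unitary times $2$, hence normal, hence subnormal, and $r\left(  T\right)  =2>1$; yet $T^{\ast}$ has empty point spectrum, so no eigenvalue of $T^{\#}$ of any modulus exists and Lemma \ref{fhs} can never be invoked. Minimality of the normal extension cannot rescue the compression step, since here $T=N$ already. Even when the adjoint does have many eigenvalues, they are not at maximum modulus: for $T=2S$ with $S$ the unilateral shift, the point spectrum of $T^{\ast}$ is the open disc $\left\{  \left\vert z\right\vert <2\right\}$, and no boundary point of $\sigma\left(  T\right)$ contributes.

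The repair is to abandon adjoint eigenvectors and instead produce directly what the proof of Lemma \ref{fhs} actually uses: a nonempty open set of vectors whose orbits diverge in norm, so that orbits are closed there and \cite[Theorem 5 (1)]{HNRR} applies. This is what the paper does. For a unit vector $f$, let $E_{f}$ be the cyclic invariant subspace generated by $f$; the model for cyclic subnormal operators represents $T|_{E_{f}}$ as $M_{z}$ on $P^{2}\left(  \mu\right)$ with $f\mapsto1$, so $\left\Vert T^{n}f\right\Vert ^{2}=\int\left\vert z\right\vert ^{2n}d\mu$. Since the norm of a subnormal operator equals its spectral radius, this forces a dichotomy: either $\left\Vert T|_{E_{f}}\right\Vert \leq1$, in which case $\sup_{n}\left\Vert T^{n}f\right\Vert \leq\left\Vert f\right\Vert$, or $\left\Vert T|_{E_{f}}\right\Vert >1$, in which case $\left\Vert T^{n}f\right\Vert \rightarrow\infty$. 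Hence $\left\{  f:\left\{  T^{n}f\right\}  \text{ bounded}\right\}  =\bigcap_{n}\left\{  f:\left\Vert T^{n}f\right\Vert \leq\left\Vert f\right\Vert \right\}$ is closed, and its complement $U=\left\{  f:\left\Vert T^{n}f\right\Vert \rightarrow\infty\right\}$ is open. If $U\neq\varnothing$, then \cite[Theorem 5 (1)]{HNRR} gives orbit reflexivity; if $U=\varnothing$, then $\left\Vert T\right\Vert \leq1$ and the contraction result of \cite{HNRR} finishes. Your Case $r\left(  T\right)  \leq1$ is correct and coincides with the $U=\varnothing$ branch, but the remaining case requires this measure-theoretic dichotomy, not spectral theory of the adjoint.
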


\begin{proof}
Suppose $f\in H$ and $\left\Vert f\right\Vert =1$. Let $E_{f}=\left\{
p\left(  T\right)  f:p\in\mathbb{C}\left[  t\right]  \right\}  ^{-}$ be the
cyclic invariant subspace for $T$ generated by $f$. We know (see \cite{C})
that there is a probability measure $\mu$ whose support is $\sigma\left(
T|_{E_{f}}\right)  $ and a unitary operator $U$ from $E_{f}$ onto the closure
$P^{2}\left(  \mu\right)  $ of the set of polynomials in $L^{2}\left(
\mu\right)  $ such that $Uf=1$ and $UT|_{E_{f}}U^{\ast}$ is the multiplication
operator $M_{z}$ on $P^{2}\left(  \mu\right)  $. Since the norm of a subnormal
operator equals it spectral radius and
\[
\left\Vert T^{n}f\right\Vert ^{2}=\int_{\sigma\left(  T|_{E_{f}}\right)
}\left\vert z^{2n}\right\vert d\mu
\]
for each $n\geq1$, we have that
\[
\left\{  T^{n}f\right\}  \text{ is bounded}\Leftrightarrow\left\Vert
T|_{E_{f}}\right\Vert \leq1\Leftrightarrow sup_{n\geq1}\left\Vert
T^{n}f\right\Vert \leq\left\Vert f\right\Vert ,
\]
and
\[
\left\Vert T|_{E_{f}}\right\Vert >1\Leftrightarrow\lim_{n\rightarrow\infty
}\left\Vert T^{n}f\right\Vert =\infty.
\]
It follows that $\left\{  f\in H:\left\{  T^{n}f\right\}  \text{ is
bounded}\right\}  $ is closed and the set
\[
U=\left\{  f\in H:\lim_{n\rightarrow\infty}\left\Vert T^{n}f\right\Vert
=\infty\right\}
\]
is open. If $U=\varnothing$, then $\left\Vert T\right\Vert \leq1,$ which
implies $T$ is orbit reflexive \cite{HNRR}. On the other hand if
$U\neq\varnothing$, then it follows from \cite[Theorem 5 (1)]{HNRR} that $T$
is orbit reflexive.
\end{proof}

\bigskip

\bigskip

\end{document}